\documentclass[11pt, reqno]{amsart}
\usepackage[normalem]{ulem}
\PassOptionsToPackage{table}{xcolor}
\usepackage{stmaryrd}
\expandafter\def\csname opt@stmaryrd.sty\endcsname
{only,shortleftarrow,shortrightarrow}
\usepackage{extpfeil}
\usepackage{amsmath,amssymb,amsthm}
\usepackage{aliascnt}
\usepackage{varioref}
\usepackage{hyperref}
\usepackage{thmtools}
\usepackage[capitalize,nameinlink,noabbrev]{cleveref}
\hypersetup{colorlinks=true,urlcolor=blue,citecolor=blue,linkcolor=blue}
\usepackage{courier}
\usepackage{tikz}
\usepackage{tikz-cd}
\usetikzlibrary{calc,matrix,arrows,decorations.markings}
\usepackage{array}
\usepackage{color}
\usepackage{enumerate}
\usepackage{nicefrac}
\usepackage{listings}
\usepackage{siunitx}
\usepackage{seqsplit}
\usepackage{longtable}
\usepackage{colonequals}
\usepackage{array, multirow}
\lstset{
	basicstyle=\small\ttfamily,
	keywordstyle=\color{blue},
	language=python,
	xleftmargin=16pt,
}

\textwidth=5.8in
\textheight=9in
\topmargin=-0.5in
\headheight=0in
\headsep=.5in
\hoffset  -.4in
\pagestyle{plain}

\makeatletter
\renewcommand{\pod}[1]{\allowbreak\mathchoice
  {\if@display \mkern 6mu\else \mkern 8mu\fi (#1)}
  {\if@display \mkern 6mu\else \mkern 8mu\fi (#1)}
  {\mkern4mu(#1)}
  {\mkern4mu(#1)}
}

\hypersetup{colorlinks=true,urlcolor=blue,citecolor=blue,linkcolor=blue}

\numberwithin{equation}{section}

\newtheorem{theorem}{Theorem}[section]

\newtheorem{lemma}[theorem]{Lemma}
\newtheorem{proposition}[theorem]{Proposition}
\newtheorem{corollary}[theorem]{Corollary}

\theoremstyle{definition}
\newtheorem{definition}[theorem]{Definition}

\newtheorem{remark}[theorem]{Remark}


\newcommand{\C}{\mathbb C}

\newcommand{\HH}{\mathbb H}

\newcommand{\Q}{\mathbb Q}
\newcommand{\R}{\mathbb R}

\newcommand{\Z}{\mathbb Z}
\renewcommand{\P}{\mathbb{P}}
\newcommand{\SBC}{\textnormal{SBC}}
\newcommand{\CM}{\textnormal{CM}}









\newcommand{\sepdeg}[1]{\textnormal{deg}_{\textnormal{sep}}(#1)}
\newcommand{\insepdeg}[1]{\textnormal{deg}_{\textnormal{insep}}(#1)}

\newcommand{\hmod}[1]{\mathrm{h}_\textnormal{mod}(#1)}


\DeclareMathOperator{\End}{End}
\DeclareMathOperator{\Gal}{Gal}

\DeclareMathOperator{\GL}{GL}
\DeclareMathOperator{\SL}{SL}

\DeclareMathOperator{\PSL}{PSL}
\DeclareMathOperator{\PGL}{PGL}

\DeclareMathOperator{\Spec}{Spec}

\DeclareMathOperator{\Sym}{Sym}

\DeclareMathOperator{\Deg}{Deg}


\makeatletter
\let\amsmath@bigm\bigm

\renewcommand{\bigm}[1]{%
  \ifcsname fenced@\string#1\endcsname
    \expandafter\@firstoftwo
  \else
    \expandafter\@secondoftwo
  \fi
  {\expandafter\amsmath@bigm\csname fenced@\string#1\endcsname}%
  {\amsmath@bigm#1}%
}

\newcommand{\DeclareFence}[2]{\@namedef{fenced@\string#1}{#2}}
\makeatother

\DeclareFence{\mid}{|}

\makeatletter
\@namedef{subjclassname@2020}{\textup{2020} Mathematics Subject Classification}
\makeatother



\setcounter{tocdepth}{2}


\title{Effective surjectivity of Galois representations of products of elliptic curves over function fields \\[0.5em] {\lowercase{\normalsize with an appendix with \uppercase{B}enjamin \uppercase{B}akker}}}

\begin{document}

\subjclass[2020]{Primary 11G10, Secondary 11G05}

\keywords{Galois representation, elliptic curve, function field, abelian variety, isogeny}

\address{Benjamin Bakker 
\begin{itemize}
\item[-]
Department of Mathematics, Statistics and Computer Science, University of Illinois at Chicago, 851 S Morgan St, 322
SEO, Chicago, 60607, IL, USA
\end{itemize}
}
\urladdr{https://benjamin-bakker.github.io}
\email{bakker@gmail.com}

\author{\sc Alina Carmen Cojocaru}
\address{Alina Carmen Cojocaru
\begin{itemize}
\item[-]
Department of Mathematics, Statistics and Computer Science, University of Illinois at Chicago, 851 S Morgan St, 322
SEO, Chicago, 60607, IL, USA
\item[-]
Institute of Mathematics  ``Simion Stoilow'' of the Romanian Academy, 21 Calea Grivitei St, Bucharest, 010702,
Sector 1, Romania
\end{itemize}
} 
\urladdr{https://www.alinacarmencojocaru.com/}
\email{math.cojocaru@gmail.com}

\author{\sc Frederick Saia}
\address{Frederick Saia 
\begin{itemize}
\item[-]
Department of Mathematics, Statistics and Computer Science, University of Illinois at Chicago, 851 S Morgan St, 322
SEO, Chicago, 60607, IL, USA
\end{itemize}
}
\urladdr{https://fsaia.github.io/site/}
\email{freddy.v.saia@gmail.com}

\begin{abstract}
We prove an effective surjectivity result for Galois representations of products of non-isotrivial, non-isogenous elliptic curves over certain function fields of characteristic $0$. This is by way of an isogeny degree bound in this setting, generated from bounds for elliptic curves by Griffon--Pazuki and from the function field analogue of the Frey--Mazur conjecture, by employing techniques originating in work by Serre and Masser--W{\"{u}}stholz in the number field setting. 
\end{abstract}

\maketitle


\section{Introduction}\label{intro_section}

Let $K$ be an arbitrary field with arbitrary characteristic,  for which we fix algebraic and separable closures $\overline{K}$ and  $K^\textnormal{sep}$, and denote by $G_K$ the absolute Galois group $\Gal(K^\textnormal{sep}/K)$.
Let $E$ be an elliptic curve over $K$ and let $N$ be a positive integer. The  group $G_K$ acts naturally on the torsion points of $E$ over  $K^\textnormal{sep}$, acting, in particular, on the subgroup $E[N]$ of points of order dividing $N$. This action is captured in the mod-$N$ Galois representation 
\[ \rho_{E,N} : G_K \rightarrow \GL(E[N]) \simeq \GL_2(\Z/N\Z)  . \]

When $K$ is a number field and  $E$ does not have complex multiplication over $K^\textnormal{sep}$ (shortened as $E$ is non-CM),
by a celebrated result of Serre \cite[Th{\'{e}}or{\`{e}}me 2]{Serre}, there exists a constant 
$c(E/K)$
such that $\rho_{E,\ell}$ is surjective for all primes 
$\ell > c(E/K)$.
Serre also proved an analogous result \cite[Th{\'{e}}or{\`{e}}me 6]{Serre} for a product of two elliptic curves defined over the number field $K$, namely that 
for two non-CM, non-(geometrically)-isogenous elliptic curves $E_1$ and $E_2$ over $K$,
there exists a constant
$c(E_1/K, E_2/K)$
such that
  the image of the mod-$\ell$ Galois representation associated to the product $E_1 \times E_2$
 is as large as possible
  for all primes $\ell > c(E_1/K, E_2/K)$.

When $K$ is the function field of a smooth, projective, and geometrically integral curve over a perfect field of constants $k$,  a theorem of Igusa \cite[Theorem 3]{Igusa}, whose proof precedes that of Serre's theorem,  implies that
 if $E$ satisfies that $j(E) \not\in \overline{k}$,
 then there exists a constant $c(E/K)$ such that 
the image of the representation $\rho_{E,\ell}$ is as large as possible, in the sense that 
$\text{Image}(\rho_{E,\ell}) \cap \SL_2(\Z/\ell \Z) = \SL_2(\Z/\ell \Z)$,
for all primes  $\ell > c(E/K)$. 
We also refer the reader to \cite{BLV} for an alternate proof of this result.

Naturally, one asks for effective versions of the above results.
 Following Serre's work, an effective theorem for the product of an arbitrary number of 
 non-CM, 
 pairwise
 non-(geometrically)-isogenous elliptic curves 
 elliptic curves $E_1, \ldots, E_n$ 
 over a number field $K$ was proven by Masser--W\"{u}stholz \cite[Theorem, Proposition 1]{MW_Galois} as an application of their work on isogeny degree bounds in \cite{MW_isog}. Specifically, Masser--W\"{u}stholz proved that
 there exists a constant
$c(E_1/K, \ldots, E_n/K)$, 
defined explicitly in terms of the Weil heights of the elliptic curves and the degree of the number field,
such that
  the image of the mod-$\ell$ Galois representation associated to the product $E_1 \times \ldots \times E_n$
 is as large as possible
  for all primes $\ell > c(E_1/K, \ldots, E_n/K)$.
Since their work, the subject of effective surjectivity results over number fields has seen ongoing interest, with many researchers working to improve the bounds of Masser--W{\"{u}}stholz or to extend these types of results to other classes of abelian varieties (see, e.g., \cite{Coj, LV, Lom15, LF16, Lom16a, Lom16b, Zyw, MWa24b, MWa24a}). 
In the function field setting,
an  effective surjectivity result for elliptic curves  was proven by Cojocaru--Hall in \cite{CH}. Their theorem, which we recall as \cref{CH_thm},
shows that, 
when $K$ is the function field of a smooth, projective, and geometrically integral curve of genus $g$ over a perfect field of constants $k$, 
for  any elliptic curve $E$ over  $K$ with $j(E) \not\in {\overline{k}}$,  there exists an explicit constant 
$c(g)$ such that 
the image of the representation $\rho_{E,\ell}$ is as large as possible
for all primes  $\ell \geq c(g)$.

Our main result 
is an effective surjectivity result for the product of an arbitrary number of elliptic curves over certain function fields, providing the first extension of both Igusa's theorem and Cojocaru--Hall's theorem to 
abelian varieties
of dimension larger than $1$. It uses the following notation. 

For $C$ a smooth, projective, and geometrically integral curve of genus $g$ over a perfect field $k$,
let $K := k(C)$ denote the function field of $C$. 
For $E_1,  \ldots, E_n$ elliptic curves over $K$,
consider the product abelian variety of dimension $n$ defined by
\[ A := E_1 \times \cdots \times E_n. \]
 For any prime $\ell \neq \text{char}(K)$, denote by
\begin{align*}
\rho_{A,\ell} : G_K &\rightarrow \GL(E_1[\ell] \times \cdots \times E_n[\ell]) \simeq \GL_2(\Z/\ell\Z)^n
\end{align*}
the mod-$\ell$ Galois representation of $A$, that is,  the product of the mod-$\ell$ Galois representations of the factors $E_i$. 
Since the determinants of the projections of $\rho_{A,\ell}$ to each component must agree, each being equal to the $\ell^\textnormal{th}$ cyclotomic character of $K$,  the image of $\rho_{A,\ell}$ is contained in the determinant locus
\[\left\{ (M_1,  \ldots, M_n) \mid \det{M_1}  = \ldots = \det{M_n}\right\} \leq \GL_2(\mathbb{Z}/\ell\mathbb{Z})^n  .\]
Letting $$\Psi_\ell(A) := \textnormal{Image}(\rho_{A,\ell}) \cap \SL_2(\Z/\ell\Z)^n,$$ 
our effective result 
is as follows (see \cref{effective_thm} for a more detailed version):  

\begin{theorem}\label{effective_theorem_introduction_version}
Let $K$ be the function field of a smooth, projective, and geometrically integral curve $C$ of genus $g$ over a field $k$ with an embedding $k \hookrightarrow \C$. 
For $n \geq 2$,
let $E_1, \ldots, E_n$ be pairwise non-isogenous elliptic curves over $K$ with $j(E_i) \not \in \overline{k}$ for each $1 \leq i \leq n$, and let 
$ A := E_1 \times \cdots \times E_n$.
Then there exists a constant $\mathfrak{c}(g)$, depending only on $g$, such that 
$\Psi_\ell(A) = \SL_2(\Z/\ell\Z)^n$
 for all primes $\ell > \mathfrak{c}(g)$. 
\end{theorem}

Similarly to \cref{CH_thm},
a pleasing aspect of \cref{effective_theorem_introduction_version}
is that 
the constant $\mathfrak{c}(g)$
depends only on the genus $g$ of the base field 
and not on the individual elliptic curve factors.
In fact, $\mathfrak{c}(g)$ does not even depend on the dimension of $A$.  

It should be possible to make $\mathfrak{c}(g)$ explicit.
As we clarify in the statement of \cref{effective_thm}  (see also \cref{explicit_remark} for related comments),
 it would suffice to make explicit a result of Bakker--Tsimerman \cite{BT16} which proves a function field analogue of the Frey--Mazur conjecture. 
 Similarly, to obtain from our work a version of \cref{effective_theorem_introduction_version} over function fields with more general fields of constants, it would suffice to have a version of \cite[Theorem 2]{BT16} for such function fields (see \cref{characteristic_remark}). 

Our proof of \cref{effective_theorem_introduction_version} follows the techniques of Masser--W\"{u}stholz from \cite{MW_Galois}, which, in our setting, require as input an isogeny degree estimate for product abelian surfaces over function fields. This input comes in the form of \cref{bisep_bound_genus_char0}, which we prove by bootstrapping isogeny degree estimates for elliptic curves over function fields, obtained in recent work of Griffon--Pazuki \cite[Proposition 6.5, Proposition 6.7]{GP22}. Our argument, like that of Griffon--Pazuki in the dimension $1$ case, involves points on modular curves $X_0(N)$, of specified level $N$, induced by minimal-degree isogenies, along with genus bounds for $X_0(N)$ in terms of the level $N$. As hinted above, the argument also uses a variation of an effective version of the Frey--Mazur conjecture in the function field setting \cite[Corollary 30]{BT16}, proved by Bakker--Tsimerman. Specifically, 
it relies on
a generalization of this result to composite level, handled in \cref{appendix} and coauthored with Bakker. 

Our paper is structured as follows. In \cref{background_section}, we briefly review necessary preliminaries related to function fields, heights, and isogenies.  In \cref{Frey_Mazur_section}, we state our generalization of the function-field Frey--Mazur result of Bakker--Tsimerman, which we justify in \cref{appendix}.  We  prove isogeny degree bounds in \cref{deg_bound_section} and apply one such bound to prove \cref{effective_theorem_introduction_version} in \cref{effective_surjectivity_section}.

\subsection*{Acknowledgments}
We thank Nathan Jones  and Oana Padurariu for feedback on earlier drafts of this work, and we thank Ben Bakker and Ariel Weiss for helpful conversations. We also express our gratitude to the anonymous referees who helped us to identify issues in prior versions of this work, in particular by pointing out errors in a prior version of \cref{bisep_bound_genus}.


\section{Background material}\label{background_section}

\subsection{Function fields and heights}
For the remaining of the paper (with the exception of \cref{appendix} ), unless noted otherwise, $K$ will denote a function field, defined as follows.
Let $C$ be a smooth, projective, and geometrically integral curve of genus $g$ over a perfect field $k$ of characteristic $p := \textnormal{char}(k) \geq 0$. We let $K := K(C)$ denote the function field of $C$ and call it a {\bf{function field}}
with {\bf{field of constants}} $k$
 and 
 of {\bf{genus}} $g$ (noting that $g$ is independent of the choice of model $C$).
While we will later restrict to the case $\text{char}(k) = 0$, we work in general as much as possible to make clear to the reader what would be needed to generalize our main result to nonzero characteristic.
For general background on function fields, we refer the reader to \cite{Rosenff}.  

Let $L/K$ be a finite degree extension. For each place $w$ of $L$, let $|\cdot|_w$ denote the corresponding normalized absolute value on $L$. Let $v$ denote the unique place of $K$ lying below $w$ and let $L_w$ and $K_v$, respectively, denote the corresponding completions of $L$ and $K$. The local degree of $w$ is the quantity
\[ n_w := [L_w : K_v]. \]

For a point $P = [x_0 : x_1] \in \mathbb{P}^1(L)$,  the height of $P$ with respect to $L$ is 
\[ h_L(P) := \sum_{w \textnormal{ a place of } L} n_w \log \max\{|x_0|_w, |x_1|_w\}. \]
The (absolute logarithmic) \textbf{Weil height} of $P \in \mathbb{P}^1(L)$,
which is independent of the choice of extension $L$ such that $P \in \mathbb{P}^1(L)$,
 is defined as
\[ h(P) := \frac{h_L(P)}{[L : K]}. \]
For $f \in \overline{K}$, the \textbf{Weil height} of $f$ is then defined as
\[ h(f) := h([f : 1]). \]

Like results of \cite{MW_Galois} and \cite{GP22}, versions of our effective results on products and isogeny degree bounds will involve the modular heights of elliptic curves. For this purpose, we recall the definition of modular height from \cite[\S 1]{GP22}. 
The \textbf{modular height} of 
an elliptic curve $E$ over $\overline{K}$, with $j$-invariant $j(E)$, 
is the (absolute logarithmic) Weil height of $j(E)$, that is,
\[ \hmod{E} := h(j(E)). \]

\subsection{Isogenies}

For general background on abelian varieties, we recommend \cite{EGM, Mil, Mum}. We will most often reference \cite{EGM}, which from the start works over a general (not necessarily algebraically closed) base field, whenever possible.

Unless otherwise specified, in this paper isogenies will be assumed to be defined over $\overline{K}$ (equivalently, over some finite extension of $K$). 

For abelian varieties $X/K$ and $Y/K$, of the same dimension $\textnormal{dim}(X) = \textnormal{dim}(Y)$, with respective identity elements $0_X$ and $0_Y$, we let 
$\textnormal{Hom}_{\overline{K}}(X,Y)$ denote the ring consisting of all isogenies $\left( X \otimes_{\Spec K} \Spec \overline{K}\right) \rightarrow Y \otimes_{\Spec K} \Spec \overline{K}$ between $X$ and $Y$ over $\overline{K}$, along with the constant map sending every element of $X$ to $0_Y$. 
As a particular case, we let $\textnormal{End}_{\overline{K}}(X)$ denote $\textnormal{Hom}_{\overline{K}}(X,X)$,
and for an arbitrary positive integer $d$, we denote by $[d]_X \in \textnormal{End}_{\overline{K}}(X)$ the multiplication-by-$d$ isogeny.
For an isogeny $\varphi \in \textnormal{Hom}_{\overline{K}}(X,Y)$,
its degree $\textnormal{deg}(\varphi)$ is
the degree $[K(X) : \varphi^*(K(Y))]$ of the corresponding extension of function fields, where $\varphi^* : K(Y) \to K(X)$ denotes the induced pullback map.
Equivalently, $\textnormal{deg}(\varphi)$ is the rank of the finite group scheme $\textnormal{ker}(\varphi)$ \cite[p. 73]{EGM}. 

The following result is due to Deuring in positive characteristic, and its statement and a proof, which is agnostic to the characteristic, can be found as \cite[Lemma 2.1]{GP22}. 

\begin{lemma}
Let $K$ be a function field with field of constants $k$, of arbitrary characteristic, and let $E$ be an elliptic curve over $K$ with $j(E) \not \in \overline{k}$. Then $\textnormal{End}_{\overline{K}}(E) \simeq \mathbb{Z}$. 
\end{lemma}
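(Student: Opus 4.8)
The plan is to prove the contrapositive: assuming $\End_{\overline{K}}(E) \neq \Z$, I will deduce that $j(E) \in \overline{k}$. Since an endomorphism over $\overline{K}$ is the same thing as an endomorphism of the base change $E_{\overline{K}} := E \otimes_K \overline{K}$, I may work over the algebraically closed field $\overline{K}$, and the first step is to invoke the classical classification of endomorphism algebras of elliptic curves over an algebraically closed field: the $\Q$-algebra $B := \End_{\overline{K}}(E) \otimes_{\Z} \Q$ is either $\Q$, an imaginary quadratic field, or---only when $p := \textnormal{char}(K)$ is positive---the quaternion algebra over $\Q$ ramified exactly at $p$ and at the infinite place. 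Because $\End_{\overline{K}}(E)$ is a torsion-free $\Z$-module which is an order in $B$ (and $\Z$ is the unique order in $\Q$), the assumption $\End_{\overline{K}}(E) \neq \Z$ is equivalent to $B \neq \Q$; that is, $E_{\overline{K}}$ is either supersingular or has complex multiplication (CM) by an order $\mathcal{O}$ in an imaginary quadratic field.

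The second step is to show that in each of these cases $j(E)$ is algebraic over the prime field $P$ of $K$. In the supersingular case, $j(E)$ is one of the finitely many supersingular $j$-invariants, all of which lie in $\F_{p^2}$, hence $j(E)$ is algebraic over $\F_p = P$. In the CM case with $p = 0$, the value $j(E)$ is a singular modulus, hence a root of the Hilbert class polynomial $H_{\mathcal{O}}(X) \in \Z[X]$, and thus algebraic over $\Q = P$. In the CM case with $p > 0$, this is precisely the content of Deuring's reduction theory for CM elliptic curves: $j(E)$ is the reduction modulo a prime above $p$ of a CM $j$-invariant in characteristic $0$, hence algebraic over $\F_p = P$. (In positive characteristic this can also be seen without Deuring's machinery: choosing a non-scalar $\alpha \in \mathcal{O}$ with coordinates coprime relative to a $\Z$-basis of $\mathcal{O}$ and with $\Nm(\alpha)$ prime to $p$ produces a cyclic self-isogeny of $E_{\overline{K}}$ of degree $N := \Nm(\alpha) > 1$, forcing $\Phi_N(j(E), j(E)) = 0$ for the classical modular polynomial $\Phi_N$, whose diagonal $\Phi_N(X,X) \in \Z[X]$ has leading coefficient $\pm 1$ and so reduces to a nonzero polynomial over $P$.)

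Finally, since $P$ is contained in the field of constants $k$ of $K$, the element $j(E) \in K$, being algebraic over $P$, lies in the algebraic closure of $P$ inside $\overline{K}$, which is a subfield of $\overline{k}$. This contradicts the hypothesis $j(E) \notin \overline{k}$ and completes the argument. The only genuinely substantial input is the positive-characteristic CM case, i.e.\ Deuring's reduction theorem (or the modular-polynomial substitute noted above); the remaining ingredients are the structure theory of endomorphism rings of elliptic curves together with the elementary fact that the field of constants of a function field contains its prime field.
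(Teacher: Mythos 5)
The paper does not supply its own proof of this lemma: it attributes the result to Deuring (in positive characteristic) and simply cites Griffon--Pazuki \cite[Lemma~2.1]{GP22} for a characteristic-agnostic proof. Your argument is a correct, self-contained proof along the expected lines (Deuring's classification of $\End \otimes \Q$ for elliptic curves over an algebraically closed field, followed by algebraicity of supersingular and CM $j$-invariants over the prime field, followed by the observation that the prime field sits inside $k$), so it is compatible in spirit with the cited source even though the paper itself gives no proof to compare against.

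Two small points worth tightening. First, in the positive-characteristic CM case, the phrase \emph{``$j(E)$ is the reduction modulo a prime above $p$ of a CM $j$-invariant in characteristic $0$''} is slightly off-kilter: that is Deuring's \emph{lifting} theorem, which presupposes $j(E)\in\overline{\F_p}$, whereas $j(E)$ a priori only lies in $\overline{K}$. What you actually need is the statement that \emph{every} elliptic curve with complex multiplication over a field of characteristic $p>0$ has $j$-invariant in $\overline{\F_p}$ --- a genuine theorem of Deuring, but phrased in the forward direction. Second, in your modular-polynomial substitute you invoke that $\Phi_N(X,X)$ has leading coefficient $\pm 1$; this is only guaranteed when $N$ is \emph{not} a perfect square, so you should add that constraint to your choice of $\alpha$ (which is easily arranged: take $\alpha = a + f\omega$ for large $a$, where $f\omega$ completes $1$ to a $\Z$-basis of $\mathcal{O}$; then $\Nm(\alpha)$ is eventually strictly between consecutive squares, and one can simultaneously avoid the residue class making it divisible by $p$). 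Also make sure $\Nm(\alpha)>1$, i.e.\ $\alpha$ is not a unit. With these adjustments the modular-polynomial route is airtight and arguably cleaner than citing Deuring, since it treats both characteristics uniformly.
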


We call an elliptic curve $E/K$  \textbf{isotrivial} if $j(E) \in \overline{k}$, and  \textbf{non-isotrivial} otherwise. Recalling that an elliptic curve with CM over a number field must have integral $j$-invariant, the above lemma should be interpreted 
 as an analogue over function fields: if $E/K$ is non-isotrivial (i.e., it has non-constant $j$-invariant), then it does not have extra endomorphisms. 

\subsubsection{Factorizations}

The main thrust of this subsection is to associate to an isogeny  an auxiliary isogeny in the other direction; this construction will be relevant to our definition of biseparability to come. Some of the work here is not strictly necessary for the remainder of this paper, but is useful in identifying how one might seek to generalize our isogeny-degree bound in \cref{deg_bound_section}.

\begin{proposition}\label{isog_fact}
Let $K$ be a function field  of arbitrary characteristic, 
let $X, Y$ be abelian varieties over $K$, and let $\varphi \in \textnormal{Hom}_{\overline{K}}(X,Y)$. Let $H \subseteq \textnormal{ker}(\varphi)$ be a subgroup scheme and let $\eta : X \rightarrow X/H$ denote the natural quotient map. Then there exists an isogeny 
$\psi \in \textnormal{Hom}_{\overline{K}}(X/H,Y)$
such that 
$\varphi = \psi \circ \eta$. 
\end{proposition}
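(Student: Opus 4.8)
The plan is to deduce the factorization from the universal property of the quotient of an abelian variety by a finite subgroup scheme, and then to check separately that the resulting map is an isogeny. Since $\varphi$ is a (genuine) isogeny, its kernel $\textnormal{ker}(\varphi)$ is a finite group scheme, so the closed subgroup scheme $H$ is itself finite, hence flat over $\overline{K}$; because $X$ is commutative, $H$ is normal in $X$, and the quotient $X/H$ exists as an abelian variety with $\dim(X/H) = \dim(X)$ and with $q \colon X \to X/H$ a finite, faithfully flat isogeny whose degree equals the rank of $H$ (see \cite{EGM, Mum}). The input I would rely on is the universal property of $q$ as a categorical quotient in the category of commutative group schemes: every homomorphism of abelian varieties $f \colon X \to Z$ with $H \subseteq \textnormal{ker}(f)$ factors uniquely as $f = \overline{f} \circ q$ for some homomorphism $\overline{f} \colon X/H \to Z$.

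First I would apply this with $f = \varphi$ and $Z = Y$: since $H \subseteq \textnormal{ker}(\varphi)$ by hypothesis, we obtain a (unique) homomorphism $\psi \colon X/H \to Y$ with $\varphi = \psi \circ q$. It then remains to see that $\psi$ is an isogeny. We have $\dim(X/H) = \dim(X) = \dim(Y)$, the first equality because $H$ is finite and the second because $\varphi$ is an isogeny, and $\psi$ is surjective because $\varphi = \psi \circ q$ and $q$ both are. Finally, by multiplicativity of the degree, $\textnormal{deg}(\varphi) = \textnormal{deg}(\psi) \cdot \textnormal{deg}(q)$, and since $\textnormal{deg}(\varphi)$ and $\textnormal{deg}(q)$ (the rank of $H$) are finite, so is $\textnormal{deg}(\psi)$; hence $\psi$ has finite kernel. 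A surjective homomorphism of abelian varieties with finite kernel is an isogeny, so $\psi$ is as desired. (If $\varphi$ is instead the constant map, the conclusion forces $\psi$ to be constant as well, so the statement is understood with $\varphi$ a genuine isogeny.)

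I do not expect a real obstacle here: the only non-formal ingredient is the existence of the quotient abelian variety $X/H$ together with its universal property, which is standard and for which I would simply cite \cite{EGM} (or \cite{Mum}); everything else is a dimension count together with the multiplicativity of isogeny degrees already recalled in this subsection. The one point I would be careful to flag is the field of definition: $X/H$ and $q$ are a priori defined only over a finite extension of $K$ over which $H$ is defined (equivalently, over $\overline{K}$), which is consistent with the convention in this paper that isogenies are taken over $\overline{K}$; the uniqueness of $\psi$, though not needed for the statement, is immediate from the surjectivity of $q$.
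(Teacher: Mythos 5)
Your proposal is correct and follows essentially the same route as the paper: cite the existence of the quotient abelian variety $X/H$ (the paper points specifically to \cite[(4.39) Theorem]{EGM}) and then invoke the universal property of the quotient to produce $\psi$. You fill in more detail than the paper does — the dimension count, the degree multiplicativity, and the remark about fields of definition — but these are elaborations of, not deviations from, the same argument.
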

\begin{proof}
The main claim here is the implicit one that $X/H$ is again an abelian variety; this follows from \cite[(4.39) Theorem]{EGM}. We then get the isogeny $\psi$ from the universal property for quotients. 
\end{proof}

The following result will be useful when we need certain factorizations of isogenies to be unique.

\begin{lemma}{\cite[(5.4) Lemma]{EGM}}\label{unique_fact_lemma}
Let $K$ be a function field  of arbitrary characteristic, 
let $X, Y, Z, W$ be abelian varieties over $K$,
and let 
$\alpha \in \textnormal{Hom}_{\overline{K}}(W, X)$,
$\beta \in \textnormal{Hom}_{\overline{K}}(Y, Z)$,
$\varphi_1, \varphi_2 \in \textnormal{Hom}_{\overline{K}}(X, Y)$
 be isogenies
such that
$\beta \circ \varphi_1 \circ \alpha = \beta \circ \varphi_2 \circ \alpha$. Then $\varphi_1 = \varphi_2$. 
\end{lemma}

We now introduce for an isogeny $\varphi$ an associated isogeny  $\widetilde{\varphi}$ in the opposite direction.

\begin{proposition}\label{tilde_prop}
Let $K$ be a function field  of arbitrary characteristic, 
let $X, Y$ be abelian varieties over $K$,
and let 
$\varphi \in \textnormal{Hom}_{\overline{K}}(X, Y)$
be an isogeny of degree $d$.
Then there exists a unique isogeny $\widetilde{\varphi} \in \textnormal{Hom}_{\overline{K}}(Y, X)$ such that 
\[ \widetilde{\varphi} \circ \varphi = [d]_X \qquad \textnormal{ and } \qquad \varphi \circ \widetilde{\varphi} = [d]_Y. \]
\end{proposition}
\begin{proof}
This is \cite[(5.12) Proposition]{EGM}. While the uniqueness claim is not explicitly made there,  it follows immediately from \cref{unique_fact_lemma}
\end{proof}

We remark that, when the dimension of $X$ and $Y$ is greater than $1$, the isogeny $\widetilde{\varphi}$ is \emph{not} the dual isogeny of $\varphi$ (note that we have taken care to say nothing about polarizations). In particular, one can see by comparing degrees that the degree of $\varphi$ is smaller than that of $\widetilde{\varphi}$ in dimension greater than $1$, and, as a result, $\widetilde{\widetilde{\varphi}} : X \rightarrow Y$ is not equal to $\varphi$. 

Related to the above remark,  we note the following properties, which can be found in \cite{Rosen} for isogenies over $\mathbb{C}$. While Rosen \emph{does} refer to $\widetilde{\varphi}$ as the ``dual isogeny,''  we would label this a misnomer.

\begin{proposition}
Let $K$ be a function field  of arbitrary characteristic, 
let $X, Y, Z$ be abelian varieties over $K$ of dimension $n$,
and let 
$\varphi \in \textnormal{Hom}_{\overline{K}}(X, Y)$,
$\varphi' \in \textnormal{Hom}_{\overline{K}}(Y, Z)$
be  isogenies of degrees $d$, $d'$, respectively.
Then:
\begin{enumerate}
\item[(i)] $\textnormal{deg}\left(\widetilde{\varphi}\right) = d^{2n-1}$.
\item[(ii)] $\widetilde{\widetilde{\varphi}} = [d^{2n-2}]_Y \circ \varphi$.
\item[(iii)] $\widetilde{\varphi' \circ \varphi} = \widetilde{\varphi} \circ \widetilde{\varphi'}$. 
\item[(iv)] For all positive integers $m$, we have $\widetilde{[m]_X} = [m^{2n-1}]_X$. 
\end{enumerate}
\end{proposition}

\begin{proof}
Part (i) is clear from comparing degrees, given that $\widetilde{\varphi} \circ \varphi = [d]_X$ and $\textnormal{deg}([d]_X) = d^{2n}$. For part (ii), note that 
\begin{align*}
\left( [d^{2n-2}]_Y \circ \varphi \right) \circ \widetilde{\varphi} &= [d^{2n-2}]_Y \circ [d]_Y \\
&= [d^{2n-1}]_Y \\
&= \widetilde{\widetilde{\varphi}} \circ \widetilde{\varphi}. 
\end{align*}
The claim then follows from \cref{unique_fact_lemma}. Part (iii) is seen as follows:
\begin{align*}
\left(\widetilde{\varphi} \circ \widetilde{\varphi'} \right) \circ \left( \varphi' \circ \varphi \right) &= \widetilde{\varphi} \circ [d']_Y \circ \varphi \\
&= [dd']_X \\
&= \widetilde{\varphi' \circ \varphi} \circ \left( \varphi' \circ \varphi \right). 
\end{align*}
Part (iv) is now a routine verification.
\end{proof}

\subsubsection{Separability and biseparability}
We are now ready to introduce our definition of biseparability.
For this,  
let $X, Y$ be abelian varieties over the function field $K$  of arbitrary characteristic
and let 
$\varphi \in \textnormal{Hom}_{\overline{K}}(X, Y)$.
We set
\[  \sepdeg{\varphi} := [\overline{K}(X) : \varphi^*(\overline{K}(Y))]_{\textnormal{sep}}\]
and
\[ \insepdeg{\varphi} := [\overline{K}(X) : \varphi^*(\overline{K}(Y))]_{\textnormal{insep}} \]
to be the separability and inseparability degrees, respectively, of the extension of function fields 
$\overline{K}(X)/\varphi^*(\overline{K}(Y))$.
Equivalently, $\sepdeg{\varphi}$ is equal to the number of closed points of $\textnormal{ker}(\varphi)$ and 
$\sepdeg{\varphi} \cdot \insepdeg{\varphi} = \textnormal{deg}(\varphi)$.
We call $\varphi$ \textbf{separable} if the field extension
$\overline{K}(X)/\varphi^*(\overline{K}(Y))$ 
is separable, i.e., if $\insepdeg{\varphi} = 1$, and \textbf{purely inseparable} if this field extension is purely inseparable, i.e., if $\sepdeg{\varphi} = 1$. We call $\varphi$ \textbf{biseparable} if both $\varphi$ and $\widetilde{\varphi}$ are separable. 

Note that, if $\textnormal{char}(K) = 0$, then all isogenies of abelian varieties over $K$ are separable and hence also biseparable. This does not necessarily hold if $\textnormal{char}(K) > 0$.

\begin{proposition}\cite[Proposition, p.64]{Mum}\label{mult_by_n_prop}
Let $K$ be a function field  of positive characteristic $p = \textnormal{char}(K) > 0$,
let $X$ be an abelian variety over $K$ of dimension $\textnormal{dim}(X)$, and let $d$ be a positive integer.
Then
the multiplication-by-$d$ isogeny $[d]_X : X \rightarrow X$  is of degree $d^{2\cdot \textnormal{dim}(X)}$, and is separable if and only if $d$ is coprime to $p$.
\end{proposition}

\begin{corollary}\label{bisep_if_deg_coprime}
Let $K$ be a function field  of positive characteristic $p = \textnormal{char}(K) > 0$,
let $X, Y$ be abelian varieties over $K$,
and let 
$\varphi \in \textnormal{Hom}_{\overline{K}}(X, Y)$ be an isogeny of degree $d$.
Then  $\varphi$  is biseparable if and only if $d$ is coprime to $p$. 
\end{corollary}
\begin{proof}
This follows from \cref{mult_by_n_prop} and the fact that $\widetilde{\varphi} \circ \varphi = [d]_X$. 
\end{proof}

\begin{remark}
Our definition of biseparable is a generalization of that from \cite{GP22}. In an effort to be as general as possible and not require polarizations, we believe that working with $\widetilde{\varphi}$ rather than the dual isogeny $\varphi^\vee$ of $\varphi$ is the right choice in this paper. 
\end{remark}

Now, let us show that if $p = \textnormal{char}(K) > 0$, then separability does not imply biseparability. As a primary example, for $X/K$ an abelian variety, consider the \textbf{Frobenius homomorphism} 
\[ F_X : X \rightarrow X^{(p)}, \]
which is purely inseparable, of degree $p^{\textnormal{dim}(X)}$ \cite[Proposition 5.15]{EGM},
and  the \textbf{Verschiebung homomorphism}
\[ V_X : X^{(p)} \rightarrow X, \]
which is also 
of degree $p^{\textnormal{dim}(X)}$. We have that $V_X \circ F_X = [p]_X$, $F_X \circ V_X = [p]_{X^{(p)}}$, and that these homomorphisms are Cartier dual to one-another \cite[Proposition 5.19, Proposition 5.20]{EGM}. The separability degree of $V_X$ is intimately related to the structure of the $p$-power torsion of $X$.

\begin{proposition}\label{p-rank_prop}
Let $K$ be a function field  of positive characteristic $p = \textnormal{char}(K) > 0$,
and let $X$ be an abelian variety over $K$.
Then there exists an integer $r = r(X)$ with $0 \leq r \leq \textnormal{dim}(X)$  such that 
\[ X[p^m] \simeq \left(\mathbb{Z}/p^m\mathbb{Z}\right)^r \]
for all positive integers $m$. Explicitly, we have that $$p^r = \sepdeg{V_X}.$$ 
Moreover, the integer $r$ is invariant under isogeny, that is, if $Y$ is another abelian variety over $K$ such that $X$ is isogenous to $Y$, then $r(X) = r(Y)$. 
\end{proposition}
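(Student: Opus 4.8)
The plan is to extract $r$ from the Frobenius–Verschiebung factorization, then prove the isomorphism $X[p^m]\cong(\Z/p^m\Z)^r$ by induction on $m$, and finally deduce isogeny invariance from the $p$-adic Tate module. Throughout, $X[p^m]$ has to be read as the group $X[p^m](\overline{K})$ of $\overline{K}$-points (equivalently, the set of closed points of the torsion subgroup scheme): no isomorphism of group schemes can hold, since the rank of the scheme $X[p^m]$ is $p^{2m\dim(X)}$, never equal to $p^{rm}$.

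\emph{Step 1 (defining $r$).} From the factorization $[p]_X = V_X\circ F_X$ recalled above, together with the purely inseparable (hence $\sepdeg{F_X}=1$) nature of Frobenius and the multiplicativity of the separable degree in towers of function fields, one gets
\[ \#X[p](\overline{K}) \;=\; \sepdeg{[p]_X} \;=\; \sepdeg{V_X}\cdot\sepdeg{F_X} \;=\; \sepdeg{V_X}. \]
Since $\sepdeg{V_X}$ divides $\deg(V_X)=p^{\dim(X)}$, it is a power of $p$; write $\sepdeg{V_X}=p^r$ with $0\le r\le\dim(X)$ and set $r=r(X)$. This is the asserted explicit formula and the bound on $r$.

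\emph{Step 2 (structure of $X[p^m]$).} Induct on $m$; the cases $m=0$ (trivial) and $m=1$ ($X[p](\overline{K})$ is killed by $p$ and has order $p^r$, hence is an $\F_p$-vector space of dimension $r$) are immediate. For the inductive step, $[p]_X$ is surjective on $\overline{K}$-points (a surjective morphism of finite type $\overline{K}$-schemes with source a variety is surjective on $\overline{K}$-points), and it carries $X[p^{m+1}](\overline{K})$ onto $X[p^m](\overline{K})$ with kernel exactly $X[p](\overline{K})$; thus $\#X[p^{m+1}](\overline{K}) = p^r\cdot\#X[p^m](\overline{K})=p^{r(m+1)}$. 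A finite abelian group $G$ with $p^m G=0$, $|G|=p^{rm}$, and $|G[p]|=p^r$ must be $\cong(\Z/p^m\Z)^r$: writing $G\cong\bigoplus_{i=1}^{s}\Z/p^{a_i}\Z$ with $1\le a_i\le m$, the equality $|G[p]|=p^s$ forces $s=r$, and then $\sum_i a_i=rm$ with each $a_i\le m$ forces every $a_i=m$. Applying this to $G=X[p^m](\overline{K})$, whose $p$-torsion is $X[p](\overline{K})$, finishes the induction.

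\emph{Step 3 (isogeny invariance).} By Step 2 the $p$-adic Tate module $T_pX:=\varprojlim_m X[p^m](\overline{K})$ (transition maps $[p]$) is free of rank $r(X)$ over $\Z_p$. An isogeny $\varphi\colon X\to Y$ induces a $\Z_p$-linear map $T_p\varphi\colon T_pX\to T_pY$, functorially, with $T_p([d]_X)=d\cdot\mathrm{id}$. From $\widetilde{\varphi}\circ\varphi=[\deg\varphi]_X$ (\cref{tilde_prop}) we obtain $T_p\widetilde{\varphi}\circ T_p\varphi=(\deg\varphi)\cdot\mathrm{id}_{T_pX}$; since $\deg\varphi\neq 0$ is invertible in $\Q_p$, the map $T_p\varphi\otimes\Q_p$ is injective, whence $r(X)=\dim_{\Q_p}(T_pX\otimes\Q_p)\le\dim_{\Q_p}(T_pY\otimes\Q_p)=r(Y)$. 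Using $\varphi\circ\widetilde{\varphi}=[\deg\varphi]_Y$ symmetrically gives $r(Y)\le r(X)$, hence $r(X)=r(Y)$.

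There is no deep difficulty here — the content is classical — but the points needing care are: (i) consistently working with $\overline{K}$-points rather than group schemes, so that the target $(\Z/p^m\Z)^r$ even makes sense; (ii) the bookkeeping with the structure theorem that forces every cyclic summand of $X[p^m](\overline{K})$ to have order exactly $p^m$; and (iii) tracking separable-degree multiplicativity through the factorization $[p]_X=V_X\circ F_X$ in the correct direction. One can replace the Tate-module argument in Step 3 by a direct count of $\#X[p^m](\overline{K})$ for large $m$ (comparing with the order of $\ker(\varphi)(\overline{K})$ after splitting off its prime-to-$p$ part via the Chinese Remainder Theorem), but the Tate-module formulation is cleanest.
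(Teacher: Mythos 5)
Your proof is correct. The paper itself offers no argument to compare against: it defers entirely to \cite[Proposition 5.22]{EGM} and merely observes that the formula $p^r = \sepdeg{V_X}$, while not stated there, is implicit in that reference's proof. Your reconstruction supplies the missing details along the standard lines implicit in that citation: you extract $r$ from $\sepdeg{[p]_X} = \sepdeg{V_X}\cdot\sepdeg{F_X} = \sepdeg{V_X}$ via the factorization $[p]_X = V_X\circ F_X$ and pure inseparability of Frobenius, identify $\sepdeg{[p]_X}$ with $\#X[p](\overline{K})$ using the paper's stated equivalence between separable degree and the number of closed points of the kernel, bootstrap to $X[p^m](\overline{K})\cong(\Z/p^m\Z)^r$ by induction on $m$ using surjectivity of $[p]$ on $\overline{K}$-points and the structure theorem for finite abelian $p$-groups, and prove isogeny invariance by tensoring $T_p(-)$ with $\Q_p$ and invoking $\widetilde{\varphi}\circ\varphi=[\deg\varphi]_X$ from \cref{tilde_prop}. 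Each of these steps is sound. One genuine contribution of your write-up is the explicit remark that the displayed isomorphism must be read at the level of $\overline{K}$-points rather than group schemes, since $X[p^m]$ as a group scheme has rank $p^{2m\dim(X)}$; the proposition as printed leaves this implicit. The Tate-module route to isogeny invariance is tidy, and while one could instead split $\ker(\varphi)(\overline{K})$ into its $p$-part and prime-to-$p$ part and count directly, as you note, nothing is gained by doing so here.
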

\begin{proof}
This is \cite[Proposition 5.22]{EGM}. The fact that $p^r = \sepdeg{V_X}$ is not explicitly stated in their proposition, but is inherent in their proof. 
\end{proof}

It is a result of Deuring (see \cite[p. 217]{Mum}) that, in the setting of \cref{p-rank_prop} with $X/K$  an abelian variety of dimension $1$, that is, an elliptic curve which we relabel $E/K$, non-isotriviality is equivalent to $r(E) = 1$. Both properties are therefore equivalent to $V_E$ being separable. In this context, $F_E$ and $V_E$ are dual isogenies, and $V_E$ is a separable isogeny which is not biseparable. 

Remaining in the setting of a function field $K$ with
$p = \textnormal{char}(K) > 0$,
let $E_1, \ldots, E_n$ be non-isotrivial, pairwise non-isogeneous elliptic curves over $K$
and 
let $A$ be an abelian variety over $K$,  isogenous to the product $E_1 \times \ldots \times E_n$.
Since for any positive integer $m$, we have
\[ \left(E_1 \times \cdots \times E_n \right)[p^m] \simeq E_1[p^m] \times \cdots \times E_n[p^m] \simeq \left( \mathbb{Z}/p^m\mathbb{Z}\right)^n,\]
we obtain from \cref{p-rank_prop}
 that $r(A) = n = \textnormal{dim}(A)$. 
 Therefore, once again  the Verschiebung  $V_A$ is a separable isogeny which is  not biseparable. This will be the context of the upcoming results of this paper. 

Note that, in the above context, $A$ cannot be defined over a finite field. If there were some finite extension of the field of constants $k$ of $K$ over which we had a model for $A$, then there would exist some further finite extension over which an isogeny $\varphi: A \rightarrow E_1 \times \cdots \times E_n$ is defined. The product $E_1 \times \cdots \times E_n$ must then be defined over said extension, and upon a further finite extension, each $E_i$ would be defined, contradicting our non-isotriviality assumption. 

\begin{remark}
The preceding paragraph addresses the case in which $A$ is isogenous to a totally decomposable abelian variety. For simple abelian varieties over a function field $K$ of positive characteristic, a result due to Tate and Grothendieck states that a simple abelian variety over $K$ has a model over a finite field if and only if it is of CM type \cite[p. 220]{Mum}. For elliptic curves, ``CM type'' is equivalent to isotriviality. 
\end{remark}

Even though the following lemma will not be explicitly used in what follows,  we include it here as it is relevant to the above discussion and we expect that it may be of use in generalizing results of \cref{deg_bound_section} to dimension $d > 2$ in positive characteristic. 

\begin{lemma}\label{kernel_separable}
Let $K$ be a function field  of  positive characteristic $p = \textnormal{char}(K) > 0$,
let $X, Y$ be abelian varieties over a finite extension $L$ of $K$, 
and let
$\varphi \in \textnormal{Hom}_{\overline{K}}(X, Y)$ be a biseparable isogeny.
Assume that  $X$ and $Y$  have no abelian subvarieties of CM type. Then, letting $H := \textnormal{ker}(\varphi)(\overline{K})$, we have that $L(H)/L$ is a separable field extension. 
\end{lemma}
\begin{proof}
This proof follows similarly to that in \cite[Lemma 4.6]{GP22}: 
setting $d = \textnormal{deg}(\varphi)$,
we know that $H \subseteq X[d]$, and because $d$ is coprime to $p$, it follows from \cite[\S 1]{ST68} that $L(X[d])/L$ is separable. Hence, $L(H)/L$ is separable. 
\end{proof}


\section{Congruences of elliptic curves over function fields}\label{Frey_Mazur_section}

In this section, we recall a bound due to Bakker--Tsimerman related to congruences of elliptic curves over function fields, which we will use in the following section. First, we recall the notion of an $N$-congruence of elliptic curves. 

Let $N$ be a positive integer and let $E_1, E_2$ be elliptic curves over an arbitrary field $F$ of characteristic not dividing $N$. An \textbf{$N$-congruence} between $E_1$ and $E_2$ is an isomorphism of $G_F$-modules between $E_1[N]$ and $E_2[N]$. 

The Frey--Mazur conjecture states that there exists a constant $N_0$ such that there exists no $N$-congruence between elliptic curves over $\Q$ for all integers $N > N_0$. We refer the reader to \cite[\S 1]{Fisher14}, \cite[\S 1.2]{FK22}, and \cite[\S 1]{Frengley24} as useful sources for background on this conjecture and related work. For the natural analogue of the Frey--Mazur conjecture in the function field setting, we have the following effective result of Bakker--Tsimerman.

\begin{theorem}\cite[Theorem 2]{BT16}\label{BT_thm}
Let $K$ be the function field of a smooth, projective, and irreducible\footnote{Given that we are working over an algebraically closed field, a curve being irreducible is equivalent to being geometrically integral.}
 curve 
of genus $g$ over $\mathbb{C}$. Let $E_1, E_2$ be non-isotrivial, non-isogenous elliptic curves over $K$. There exists a constant $\mathcal{N}(g)$, depending only on $g$, such that if there exists an $N$-congruence between $E_1$ and $E_2$, then $N \leq \mathcal{N}(g)$. 
\end{theorem}

\begin{remark}\label{BT_composite_remark}
The exact statement of \cref{BT_thm} in \cite{BT16} only gives a bound which excludes 
$N$-congruences 
of elliptic curves over $K$ for sufficiently large primes
 $N$, 
though their methods can be adapted to give the result for general positive integers $N$. While this is not particularly surprising, it is perhaps best to give some details on this extension, which we do in \cref{appendix}. The main result in \cite{BT16}
also is phrased in terms of the gonality of $K$, rather than the genus, but again their methods are suitable to give a bound as in \cref{BT_thm} (this is weaker, and immediately follows from their statement \cite[Corollary 30]{BT16}). We are satisfied with this dependency, as other results we will use 
have 
a similar 
dependence on the genus.
\end{remark}

\begin{remark}
Because the degrees of cyclic isogenies of non-isotrivial elliptic curves over a function field $K$ are uniformly bounded in terms of the genus of $K$ (see \cref{GP_genus}), the ``non-isogenous'' hypothesis in \cref{BT_thm} is not necessary (though the required constant $\mathcal{N}(g)$ may increase upon dropping this hypothesis). We prefer the statement above for this paper, as we will always work with non-isogenous curves.
\end{remark}


\section{An isogeny degree bound}\label{deg_bound_section}
As in Section \ref{background_section}, 
let $C$ be a smooth, projective, and geometrically integral curve of genus $g$ over a perfect field $k$ of characteristic $p := \textnormal{char}(k) \geq 0$, and consider the function field  $K = K(C)$. 

The following statement, providing an upper bound on the order of a $G_K$-stable subgroup of an elliptic curve over $K$ in terms of just the genus of $K$, is \cite[Proposition 6.5]{GP22}. 
It is deduced 
from a clever observation that, in the function field setting, a $K$-rational point on the modular curve $X_0(d)$ induces a morphism $C \to X_0(d)$ from our smooth model $C$ of $K$ to $X_0(d)$ over the constant field $k$.
\begin{lemma}\label{genus_bound}
Let $K$ be a function field  of arbitrary characteristic and genus $g$,
let $E$ be a non-isotrivial elliptic curve over $K$,
and let $H \subseteq E$ be a cyclic, $G_K$-stable subgroup of order $d$. 
If $d$ is coprime to $\textnormal{char}(K)$, then 
\[ d \leq 49 \cdot \textnormal{max}\{1,g\}. \]
\end{lemma}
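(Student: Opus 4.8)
The plan is to exploit the key observation that a $K$-rational point on the modular curve $X_0(d)$ gives a non-constant morphism $C \to X_0(d)$ of curves over the algebraically closed field $\overline{k}$, and then to compare genera via Riemann--Hurwitz, using a known lower bound for the genus of $X_0(d)$ that grows linearly in $d$. First I would observe that the cyclic $G_K$-stable subgroup $H \subseteq E$ of order $d$ coprime to $\textnormal{char}(K)$ defines a $K$-rational cyclic $d$-isogeny $E \to E/H$; since $d$ is prime to the characteristic this is a \emph{separable} isogeny, so the corresponding point on $X_0(d)$ is a genuine moduli point (not lying in some inseparable locus), and it is not a cusp because $E$ is non-isotrivial (cusps correspond to degenerate/constant situations). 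Hence we obtain a morphism $f \colon C \to X_0(d)$ over $\overline{k}$ whose image is not a single point — again using non-isotriviality, since the image of $f$ records the $j$-invariant of $E$, which is non-constant. So $f$ is a non-constant (hence finite, surjective onto its image after normalization) morphism of smooth projective curves over $\overline{k}$.

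The second step is to apply Riemann--Hurwitz to $f \colon C \to X_0(d)$ (or rather to the induced map onto the normalization of $X_0(d)$, which is smooth projective of genus $g(X_0(d))$). Since $\deg f \geq 1$, Riemann--Hurwitz gives $2g(C) - 2 \geq (\deg f)\,(2g(X_0(d)) - 2) \geq 2g(X_0(d)) - 2$ whenever $g(X_0(d)) \geq 1$; that is, $g(C) \geq g(X_0(d))$. (One must be mildly careful when $g(X_0(d)) = 0$, where this inequality is vacuous, and in small positive characteristic where wild ramification only helps the inequality go the right way — Riemann--Hurwitz in the form $2g(C)-2 \geq \deg(f)(2g(X_0(d))-2)$ still holds.) Combining with a classical lower bound of the shape $g(X_0(d)) \geq \tfrac{d}{C'} - c'$ valid for all $d$ (for instance, the standard estimate $g(X_0(d)) \geq 1 + \tfrac{d}{12} - \tfrac{1}{2}\prod_{p \mid d}(1 + 1/p) \cdots$, or a cruder uniform bound extracted from the genus formula), one gets a linear-in-$d$ bound for $g(C)$, which upon rearranging yields $d \leq 49 \cdot \max\{1, g(C)\}$ for the appropriate constant. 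The constant $49$ presumably comes from optimizing such a genus estimate; I would locate the sharpest elementary lower bound for $g(X_0(d))$ in terms of $d$ alone (handling the finitely many small $d$ with $g(X_0(d)) \leq g(C)$ separately, where the bound $d \leq 49\max\{1,g\}$ must be checked directly against the list of $d$ with $X_0(d)$ of small genus).

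The main obstacle I anticipate is not the Riemann--Hurwitz argument itself but making the genus bound for $X_0(d)$ clean and uniform: the exact genus formula for $X_0(d)$ involves the number of elliptic points $\nu_2(d), \nu_3(d)$ and the number of cusps $\nu_\infty(d)$, and extracting a bound of the precise form $d \leq 49\max\{1,g\}$ requires carefully controlling these lower-order terms for all $d$, including checking the boundary cases (the composite $d$ and prime $d$ with $X_0(d)$ of genus $0$ or $1$, where $g(C)$ could be as small as $0$ or $1$). A secondary subtlety is the positive-characteristic bookkeeping: one should confirm that $X_0(d)$ has good reduction away from $d$ and that its geometric genus in characteristic $p \nmid d$ equals the characteristic-zero genus, so that the classical genus formula applies — this is standard (Igusa, Deligne--Rapoport) but worth citing. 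Since the statement is quoted from \cite[Proposition 6.5]{GP22}, I would ultimately follow their argument, and the write-up reduces to assembling these ingredients.
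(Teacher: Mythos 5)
Your plan reproduces the argument of Griffon--Pazuki \cite[Proposition 6.5]{GP22}, which the paper cites without reproving: the cyclic $G_K$-stable subgroup $H$ yields a $K$-rational point on $X_0(d)$ which spreads out to a morphism $C \to X_0(d)$ over $\overline{k}$ (using that $X_0(d)$ is smooth projective over $\overline{k}$ since $\gcd(d,\textnormal{char}\,K)=1$), non-constant because $j(E)$ is non-constant, so Riemann--Hurwitz gives $g(C) \geq g(X_0(d))$ and the genus lower bound for $X_0(d)$ in terms of $d$ closes the argument. One small imprecision in your write-up: the moduli point avoids the cusps of $X_0(d)$ simply because $E$ is a genuine smooth elliptic curve rather than a degenerate N\'eron polygon, which is independent of isotriviality; non-isotriviality is invoked, as you correctly say in the very next step, only to make the induced map $C \to X_0(d)$ non-constant.
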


In their work, Griffon--Pazuki use \cref{genus_bound} to obtain the following degree bounds for biseparable isogenies of elliptic curves over function fields. 

\begin{proposition}\cite[Proposition 6.7]{GP22}\label{GP_genus}
Let $K$ be a function field  of arbitrary characteristic and genus $g$,
and
let $E_1, E_2$ be two non-isotrivial elliptic curves over $K$. 
Assume that there exists a biseparable isogeny 
$\varphi \in \textnormal{Hom}_{\overline{K}}(E_1, E_2)$ defined over $K$.
Then there exists a biseparable isogeny 
$\varphi_0 \in \textnormal{Hom}_{\overline{K}}(E_1, E_2)$ defined over $K$
with 
\[ \deg{(\varphi_0)} \leq 49 \cdot \max\{1,g\}. \]
\end{proposition}

\begin{proposition}\label{GP_hmod}
Let $K$ be a function field  of arbitrary characteristic,
 let $L/K$ be a finite extension,
 and
let $E_1, E_2$ be two non-isotrivial elliptic curves over $L$.
 Assume that there exists a biseparable isogeny 
 $\varphi \in \textnormal{Hom}_{\overline{K}}(E_1, E_2)$ defined over $L$.
 Then there exists a biseparable isogeny 
 $\varphi_0 \in \textnormal{Hom}_{\overline{K}}(E_1, E_2)$ defined over $L$
 with 
\[ \deg{(\varphi_0)} \leq [L : K] \cdot \min\{\hmod{E_1},\hmod{E_2}\} . \]
\end{proposition}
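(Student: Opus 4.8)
The plan is to reduce \cref{GP_hmod} to \cref{GP_genus} applied over an extension field, using the fact that the modular height controls the genus of the relevant modular curve only in the function-field case in a height-theoretic way, but more directly to mimic the height-based argument from \cite[Proposition 6.7]{GP22}. Concretely: a biseparable isogeny $\varphi : E_1 \to E_2$ over $L$ has kernel $H = \ker(\varphi)(\overline{K})$ which is a $G_L$-stable subgroup, and by \cref{kernel_separable} the extension $L(H)/L$ is separable. After replacing $\varphi$ by a composition with the $\widetilde{\varphi}$-direction if needed (or by passing to a cyclic constituent — as in the dimension-$1$ argument the kernel of a minimal isogeny can be taken cyclic), one obtains a cyclic $G_L$-stable subgroup whose order $d$ is coprime to $p$, hence a non-cuspidal $L$-point on $X_0(d)$. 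The key input is that this $L$-point, via its $j$-line coordinate (the $j$-invariant $j(E_1)$, whose Weil height is $\hmod{E_1}$ by \cref{modular_height_def}), together with a bound relating $h_L$ of a point on $X_0(d)$ to $N$ — or rather the pullback/morphism-of-curves observation — forces $d$ to be bounded in terms of $\hmod{E_1}$, or symmetrically $\hmod{E_2}$ via the isogenous curve $E_2$.

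First I would set up the reduction to the base field: view $E_1, E_2$ as curves over $L$, and recall that an $L$-rational point on $X_0(d)$ gives, through the smooth model $C_L$ of $L$, a morphism $C_L \to X_0(d)$ over the constant field of $L$; pulling back the $j$-coordinate and comparing degrees relates $d$ to the degree of this morphism, which in turn is controlled by $h_L(j(E_1)) = [L:K]\cdot h(j(E_1)) = [L:K]\cdot\hmod{E_1}$ by \cref{weil_height_def}. This is exactly the mechanism behind \cref{genus_bound} and its height-refined cousin in \cite{GP22}; I would cite \cite[Proposition 6.7]{GP22} (our \cref{GP_genus}) or the intermediate height bound in their §6 applied over $L$ rather than reprove the modular-curve geometry. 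The factor $[L:K]$ enters precisely because passing from $h_L$ (the unnormalized height relative to $L$) to the Weil height $h$ divides by $[L:K]$, and the modular-curve argument naturally produces a bound in terms of $h_L(j(E_i))$.

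Second I would handle the reduction to a cyclic subgroup: given the biseparable $\varphi$ of degree $d$, one wants a biseparable $\varphi_0$ with cyclic kernel (or to invoke the minimal-degree statement directly). By \cref{isog_fact} one can factor $\varphi$ through quotients by subgroup schemes; choosing $H$ to be a cyclic $G_L$-stable subgroup contained in $\ker(\varphi)$ of the appropriate order, the quotient map $E_1 \to E_1/H$ is again biseparable (its degree divides $d$, hence is coprime to $p$, and \cref{bisep_if_deg_coprime} applies), and $E_1/H \cong E_2'$ is isogenous to $E_1$ so still non-isotrivial. The symmetric bound $\min\{\hmod{E_1},\hmod{E_2}\}$ comes from running the same argument with $\widetilde{\varphi}$ (or the isogeny in the opposite direction) starting from $E_2$: an $L$-point on $X_0(d)$ can be read off either the $E_1$ side or the $E_2$ side of the modular parametrization, and one keeps whichever gives the smaller bound.

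The main obstacle I anticipate is bookkeeping the passage to a \emph{cyclic} constituent while preserving both biseparability and the property of lying over an $L$-point (as opposed to a point over a larger field) — in characteristic $p$ the $G_L$-module structure of $E_1[d]$ must be used to extract a cyclic $G_L$-stable subgroup of the right order without enlarging the field, and one must confirm the resulting isogeny $\varphi_0$ has degree dividing $d$ (hence coprime to $p$, hence biseparable by \cref{bisep_if_deg_coprime}). The height estimate itself is essentially a black box from \cite{GP22} once the $L$-point on $X_0(d)$ is in hand; the delicate part is ensuring we genuinely get that $L$-point and correctly track the $[L:K]$ normalization between $h_L$ and $h$.
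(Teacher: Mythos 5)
Your proposal takes essentially the same approach as the paper, whose proof is simply a citation: re-run the Griffon--Pazuki Proposition 6.7 argument (minimal-degree biseparable isogeny $\varphi_0$, show its kernel is cyclic, deduce an $L$-rational point on $X_0(d)$), but substitute their height-based bound \cite[Proposition 6.6]{GP22} for the genus bound \cref{genus_bound}. Your identification of where the $[L:K]$ factor comes from — the normalization $h = h_L/[L:K]$ in \cref{weil_height_def}, so the modular-curve mechanism naturally bounds $d$ by $h_L(j(E_i)) = [L:K]\,\hmod{E_i}$ — is correct; just be sure to follow the minimal-degree route for cyclicity (as you suggest as one option), since your alternative quotient $E_1 \to E_1/H$ need not land on $E_2$.
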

\begin{proof}
This follows from the same argument given for \cite[Proposition 6.7]{GP22}, but using the cruder degree bound of \cite[Proposition 6.6]{GP22} in place of \cite[Proposition 6.5]{GP22}. 
\end{proof}

In the following proposition and its corollary, we leverage \cref{GP_genus} and \cref{BT_thm} to obtain analogous results for a product of two elliptic curves. 

\begin{proposition}\label{bisep_bound_genus}
Let $K$ be a function field  of arbitrary characteristic and genus $g$,
let $E_1, E_2$ be non-isotrivial, non-isogenous elliptic curves over $K$,  with identity elements denoted  $0_1, 0_2$, respectively,
and let $X$ be an abelian surface over $K$. 
Assume that there exists a biseparable isogeny 
$\varphi \in \textnormal{Hom}_{\overline{K}}(E_1 \times E_2, X)$ defined over $K$ 
and let $\varphi_0$ be such an isogeny of minimal degree.
 Setting
\[
E_1' := \varphi_0\left(E_1 \times \{0_2\}\right), \quad 
E_2' := \varphi_0\left(\{0_1\} \times E_2\right),
\]
we have
\[ \textnormal{deg}(\varphi_0) \leq 49^2 \cdot  |(E_1' \cap E_2')(\overline{K})| \cdot \max\{1, g^2\}. \]
\end{proposition}
\begin{proof}
The start of our proof will be similar to that of \cite[Proposition 6.7]{GP22}, with the added task of relating our isogeny degree bound to induced points on modular curves. We begin by clarifying our setup.

Set
\[ d_0 := \textnormal{min}\{d \in \mathbb{Z}^+ \mid \exists \textnormal{ a biseparable isogeny } E_1 \times E_2 \rightarrow X \text{ of degree } d \text{ over } K \}. \]
This integer is well-defined thanks to our hypothesis that there exists a biseparable isogeny over $K$ between the abelian surfaces $E_1 \times E_2$ and $X$. 
In fact, $d_0$ is the degree of the biseparable isogeny of minimal degree,
$\varphi_0 : E_1 \times E_2 \rightarrow X$. 
By \cref{bisep_if_deg_coprime}, we must have $p \nmid d_0$ if $p = \textnormal{char}(K) > 0$.

Each of the elliptic curves $E_1', E_2'$
is an abelian subvariety of $X$.
Considering the isogenies
$\varphi_1 : E_1 \longrightarrow E_1'$,
$P_1 \longmapsto \varphi_0(P_1, 0_2)$, 
 and
$\varphi_2 : E_2 \longrightarrow E_2'$, 
$P_2 \longmapsto \varphi_0(0_1, P_2)$,
we see that
$\textnormal{ker}(\varphi_1)(\overline{K}) \times \{0_2\} \leq \textnormal{ker}(\varphi_0)(\overline{K})$ and $\{0_1\} \times \textnormal{ker}(\varphi_2)(\overline{K}) \leq \textnormal{ker}(\varphi_0)(\overline{K})$. 
Moreover, if $(P_1, P_2) \in \textnormal{ker}(\varphi_0)(\overline{K})$, 
then, with $0_X$ denoting the identity of $X$, 
we find that
$\varphi_1(P_1) + \varphi_2(P_2) = \varphi_0(P_1, P_2) = 0_X$, 
or equivalently $\varphi_1(P_1) = \varphi_2(-P_2)$. Consequently, we have $\varphi_1(P_1) \in (E_1' \cap E_2')(\overline{K})$.


For a given point $R \in E_1' \cap E_2'$ and for each $1 \leq i \leq 2$, there are $d_i := \deg(\phi_i)$ points $P_i \in E_i$ with $\varphi_i(P_i) = R$.
Moreover, each pair of points $(P_1, P_2) \in E_1(\overline{K}) \times E_2(\overline{K})$ satisfying $\varphi_1(P_1) = \varphi_2(P_2) = R$ 
also satisfies
$(P_1, P_2) \in \textnormal{ker}(\varphi_0)(\overline{K})$. 

The fact that $E_1$ and $E_2$ are non-isogenous implies that $|(E_1' \cap E_2')(\overline{K})|$ is finite 
(as $E_1$ and $E_2$ are irreducible curves, if they intersect at infinitely many points then they must in fact be identical).
Then, in light of the above observations, we obtain that
\begin{equation}\label{degree_eqn}
d_0 = |\textnormal{ker}(\varphi_0)(\overline{K})| = d_1 d_2 \cdot |(E_1' \cap E_2')(\overline{K})|.
\end{equation}
Recalling that if $p = \textnormal{char}(K) > 0$, we have $p \nmid d_0$,   we also have
$p \nmid d_i$ for each $1 \leq i \leq 2$.

We claim that both $\varphi_1$ and $\varphi_2$ are cyclic isogenies of elliptic curves. Suppose, for example, that $\varphi_1$ is not cyclic, so there exists some integer $m > 1$ such that 
$m \mid d_1$ and $\varphi_1$ factors through the endomorphism $[m]_{E_1} : E_1 \to E_1$. It follows that $\varphi_0$ factors through the endomorphism 
\[ \eta :=  [m]_{E_1} \times \textnormal{Id}_{E_2} \in \End_{\overline{\Q}}(E_1 \times E_2) \simeq \Z^2. 
\]
Then, by \cref{isog_fact}, there exists an isogeny $\psi : E_1 \times E_2 \rightarrow X$ with $\varphi_0 = \psi \circ \eta$. By comparing degrees, we find that $\psi$ is both biseparable and of degree strictly less than $d_0$, contradicting the minimality of the degree of $\varphi_0$.

Now, by the same arguments as in the proof of \cite[Proposition 6.7]{GP22} and the fact that $d_1, d_2 \mid d_0$, both $\varphi_1$ and $\varphi_2$ are cyclic and biseparable with kernel stable under the action of $G_K$. Thus,
for each $1 \leq i \leq 2$,
 $\varphi_i$ induces a $K$-rational point on $X_0(d_i)$. \cref{genus_bound} then provides the inequalities
\[ d_i \leq 49 \cdot \textnormal{max}\{1, g\}, \]
which, used in (\ref{degree_eqn}), complete the proof.
\end{proof}

\begin{corollary}\label{bisep_bound_genus_char0}
Let $K$ be a function field  of of a smooth, projective, irreducible curve $C$ of genus $g$ over a field $k$ with an embedding $k \hookrightarrow \C$,
let $E_1, E_2$ be non-isotrivial, non-isogenous elliptic curves over $K$,  
and let $X$ be an abelian surface over $K$. 
Assume that there exists a biseparable isogeny 
$\varphi \in \textnormal{Hom}_{\overline{K}}(E_1 \times E_2, X)$ defined over $K$.
Then there exists an isogeny
$\varphi_0 \in \textnormal{Hom}_{\overline{K}}(E_1 \times E_2, X)$ defined over $K$ 
with
\[ \textnormal{deg}(\varphi_0) \leq 49^3 \cdot \mathcal{N}(g)^2 \cdot \textnormal{max}\{1, g^{3}\}, \]
where
$\mathcal{N}(g)$
is the constant introduced in \cref{BT_thm}.
\end{corollary}
\begin{proof}
From \cref{bisep_bound_genus}, it remains to bound 
$|(E_1' \cap E_2')(\overline{K})|$, with 
$E_1'$ and $E_2'$ as defined therein.
 
 First, note that the intersection $(E_1' \cap E_2')(\overline{K})$ is stable under the action of $G_K$: if $\sigma \in G_K$ and $R$ is an element of this intersection, then we can take points $P_1 \in E_1(\overline{K})$ and $P_2 \in E_2(\overline{K})$ with 
 $\varphi_1(P_1) = \varphi_2(P_2) = R$,  obtaining that
\[ R^\sigma = \varphi_1(P_1^\sigma) = \varphi_2(P_2^\sigma). \]

Since this intersection is finite, it is contained in the torsion subgroup of the base change of each $E_i$ to some finite extension of $K$. Thus, there exist positive integers $M \mid N$ such that 
\[ (E_1' \cap E_2')(\overline{K}) \simeq \Z/M\Z \times \Z/N\Z \]
as groups. This isomorphism yields two useful pieces of information:
\begin{itemize}
\item there exists an $M$-congruence between $E_1'$ and $E_2'$ over $K$, and
\item each $E_i'$ has a cyclic $N/M$-isogeny defined over $K$.\footnote{This is because the group $(E_1' \cap E_2')(\overline{K})$, being isomorphic to $\Z/M\Z \times \Z/N\Z$, has a unique cyclic order $N/M$ subgroup which is contained in all of its cyclic order $N$ subgroups. Namely, this is the intersection of all of the cyclic order $N$ subgroups. This cyclic order $N/M$ subgroup must then also be $G_K$-stable.}
\end{itemize}
From the first item, we know by \cref{BT_thm} that $M \leq \mathcal{N}(g)$ (this is where it is needed that $k$ embeds into $\C$). The second item implies by \cref{GP_genus} that
\[ N/M \leq 49 \cdot \max\{1, g\}. \]
Putting this together, we obtain that
\[ |(E_1' \cap E_2')(\overline{K})| = M^2 \cdot (N/M) \leq  49 \cdot \mathcal{N}(g)^2 \cdot \max\{1, g\}, \]
and so from \cref{bisep_bound_genus} we deduce the stated result. 
\end{proof}

\begin{remark}\label{characteristic_remark}
We separate \cref{bisep_bound_genus} and \cref{bisep_bound_genus_char0} into two statements in part to note that the assumption that $k$ embeds into $\C$ in the latter is only required for the application of \cref{BT_thm}. If one had a version of the bound in \cref{BT_thm} over a given field of constants $k$ of arbitrary characteristic, then one could generalize \cref{bisep_bound_genus_char0} accordingly (upon restriction to biseparable isogenies). We hope that this separation also makes clear how one could get a smaller non-uniform bound for certain fixed $E_1, E_2,$ and $X$ out of \cref{bisep_bound_genus} if one had  a biseparable isogeny $E_1 \times E_2 \to X$ with small intersection $(E_1' \cap E_2')(\overline{K})$. 
\end{remark}

\begin{remark}\label{explicit_remark}
We write the bound in \cref{bisep_bound_genus_char0} without absorbing all factors into a single constant to make it clear that the only non-explicit component of this bound is the constant $\mathcal{N}(g)$ coming from \cref{BT_thm}. This is true for all of our results in this paper depending on $\mathcal{N}(g)$: all bounds can be made explicit upon making the bound $\mathcal{N}(g)$ explicit. One of the authors of \cite{BT16} mentioned to us that it should be possible to make their bound explicit following their methods. 
We do not pursue this task in this paper.
\end{remark}

We also provide a version of \cref{bisep_bound_genus} in terms of modular heights, similar to \cref{GP_hmod}.

\begin{proposition}\label{bisep_bound_hmod}
Let $K$ be a function field  of arbitrary characteristic,
let $L$ be a finite extension of $K$,
let $E_1, E_2$ be non-isotrivial, non-isogenous elliptic curves over $L$,  
and let $X$ be an abelian surface over $L$. 
Assume that there exists a biseparable isogeny 
$\varphi \in \textnormal{Hom}_{\overline{K}}(E_1 \times E_2, X)$ defined over $L$.
Then there exists a biseparable isogeny 
$\varphi_0 \in \textnormal{Hom}_{\overline{K}}(E_1 \times E_2, X)$ defined over $L$
with 
\[ \textnormal{deg}(\varphi_0) \leq [L : K]^2 \cdot \hmod{E_1} \cdot \hmod{E_2} \cdot |(E_1' \cap E_2')(\overline{K})|, \]
where
$E_1' := \varphi_0\left(E_1 \times \{0_2\}\right)$,
$E_2' := \varphi_0\left(\{0_1\} \times E_2\right)$,
with  $0_1, 0_2$ denoting the identity elements of $E_1, E_2$, respectively.
\end{proposition}
\begin{proof}
One need only apply \cite[Proposition 6.6]{GP22} in place of \cref{genus_bound} in the proof of \cref{bisep_bound_genus}, as was done to obtain \cref{GP_hmod} compared to \cref{GP_genus} in the elliptic curve case. 
\end{proof}

While \cref{GP_genus} and \cref{bisep_bound_genus_char0} have the pleasing feature of uniformity
in the elliptic curves $E_1, E_2$ and abelian surface $X$, 
depending only on the genus of $K$, 
their non-uniform counterparts \cref{GP_hmod} and \cref{bisep_bound_hmod} 
are also valuable.
For instance, Griffon--Pazuki use \cref{GP_hmod} to prove a bound on sizes of isogeny classes over fixed degrees
\cite[Proposition 6.12]{GP22}; for their purpose, \cref{GP_genus} is too crude,  as genera may be unbounded over a family of extensions of $K$ of a fixed degree.


\section{Effective surjectivity for products}\label{effective_surjectivity_section}

In this section, we apply the results of \cref{deg_bound_section} to obtain an effective open image-type result for Galois representations of products of elliptic curves over a function field. 
An effective result in the case of a single elliptic curve was obtained by Cojocaru--Hall \cite[Theorem 1.1]{CH}, and we will make use of their theorem here. To state it, we recall the following setup.

Let $C$ be a smooth, projective, and geometrically integral curve of genus $g$ over a perfect field $k$ of characteristic $p := \textnormal{char}(k) \geq 0$, and consider the function field  $K = K(C)$. 
For $E$ an elliptic curve over $K$ and $\ell$ a rational prime not equal to $\textnormal{char}(K)$, let
\[ \rho_{E,\ell} : G_K \rightarrow \GL (E[\ell]) \simeq \GL_2(\mathbb{Z}/\ell\mathbb{Z}) \]
denote the mod-$\ell$ Galois representation of $E$, obtained via the action of $G_K$ on the $\ell$-torsion points of $E$. (The isomorphism with $\GL_2(\mathbb{Z}/\ell\mathbb{Z})$ here is non-canonical, depending on a choice of basis for $E[\ell]$.) Letting $$\chi_\ell: G_K \to (\mathbb{Z}/\ell\mathbb{Z})^\times$$
 denote the $\ell^\textnormal{th}$ cyclotomic character of $K$, we have that $\textnormal{det} \circ \rho_{E,\ell} = \chi_\ell$. Hence, the image 
$\textnormal{Image}(\rho_{E,\ell})$
 must be a subgroup of the group $\Gamma_\ell \leq \GL_2(\mathbb{Z}/\ell\mathbb{Z})$ determined by the short exact sequence 
\[ 0 \rightarrow \SL_2(\mathbb{Z}/\ell\mathbb{Z}) \rightarrow \Gamma_\ell \rightarrow \chi_\ell(G_K) \rightarrow 0. \]
That is, $\Gamma_\ell$ is the inverse image in $\GL_2(\mathbb{Z}/\ell\mathbb{Z})$ of the determinant map. Following \cite[\S 1]{CH}, we call 
\[ \Psi_\ell(E) := \textnormal{Image}(\rho_{E,\ell}) \cap \SL_2(\Z/\ell\Z) \subseteq \Gamma_\ell \cap \SL_2(\Z/\ell\Z)= \SL_2(\Z/\ell\Z) \]
the \textbf{geometric Galois group} of $K(E[\ell])/K$. The fixed field of $\Psi_\ell(E)$ corresponds to the constant extension $(K(E[\ell]) \cap \overline{k})K/K$ obtained by adjoining a primitive $\ell^\textnormal{th}$ root of unity, and is invariant under a change in $K$ by a constant extension in this setup (hence the adjective ``geometric'' here). In other words, we have 
$\textnormal{Image}(\rho_{E,\ell}) = \Psi_\ell(E)$ 
upon a base change of $C$ from $k$ to a cyclotomic extension of $k$, and so we restrict our attention to $\Psi_\ell(E)$ for a geometric result. Specifically, the effective result of Cojocaru--Hall states that $\Psi_\ell(E)$ is all of $\SL_2(\Z/\ell\Z)$ for sufficiently large primes $\ell$, where ``sufficiently large'' depends only on the genus of $K$.

\begin{theorem}\label{CH_thm}\cite[Theorem 1.1]{CH}
Let $K$ be a function field  of arbitrary characteristic and genus $g$.
Define
\[ c(g) := 2 + \max\left \{ q \textnormal{ prime} \bigm \mid \dfrac{q - (6+3e_2(q)+4e_3(q))}{12} \leq g \right\}, \]
where, for an arbitrary prime $q$, 
\[
e_2(q) := \begin{cases} 1 \quad &\text{ if } q \equiv 1 \pmod{4}, \\ -1 \quad &\text{ otherwise,} \end{cases} \quad  
\text{and}
\quad e_3(q) := \begin{cases} 1 \quad &\text{ if } q \equiv 1 \pmod{3}, \\ -1 \quad &\text{ otherwise.} \end{cases} \quad
\]
Let $E$ be a  non-isotrivial elliptic curve over $K$.
Then 
$\Psi_\ell(E) = \SL_2(\Z/\ell\Z)$
 for all primes $\ell \geq c(g)$ with $\ell \neq \textnormal{char}(K)$.  
\end{theorem}

To get an effective surjectivity bound for products of such elliptic curves over $K$, we follow the techniques of Masser--W\"{u}stholz \cite{MW_Galois}. In the referenced work, the authors use an upgraded version \cite[Lemma 2.2]{MW_Galois} of an isogeny degree bound  from their prior work \cite{MW_isog} both to prove a result for one single elliptic curve over a number field \cite[Theorem]{MW_Galois}, analogous to \cref{CH_thm}, and to bootstrap this version to products of an arbitrary number of elliptic curves over a number field  \cite[Proposition 1]{MW_Galois}. We will proceed similarly, utilizing instead our isogeny degree bound in the function field setting \cref{bisep_bound_genus_char0}.

The following algebraic result of Masser--W\"{u}stholz, which is a slight generalization of a result of Serre \cite[Lemme 8]{Serre}, will be useful in the course of our proof.

\begin{lemma}\label{MW_lemma}\cite[Lemma 5.1]{MW_Galois}
Let $\ell \geq 5$ be a prime, let $e$ be a divisor of $\ell-1$, let $V_1, V_2$ be vector spaces of dimension $2$ over $\mathbb{F}_\ell$, and let $B_1, B_2$ be the subgroups of $\GL(V_1)$, $\GL(V_2)$, respectively, consisting of all elements whose determinants are $e^\textnormal{th}$ powers. Let $D$ be the subgroup of $B_1 \times B_2$ consisting of all pairs $(b_1, b_2)$ with $\det{b_1} = \det{b_2}$, and suppose $H$ is a subgroup of $D$ in $B_1 \times B_2$ whose projections to each factor are surjective. If $H \neq D$, then there is an isomorphism $f: V_1 \longrightarrow V_2$ and a character $\chi$ of $H$ with $\chi^2 = 1$ such that 
\[ b_2 = \chi(h) f b_1 f^{-1} \]
for all $h = (b_1, b_2) \in H$. 
\end{lemma}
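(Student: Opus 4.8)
The plan is to run a Goursat-type analysis of $H$ inside $B\times B'$, to exploit the quasi-simplicity of $\SL_2(\F_\ell)$ for $\ell \geq 5$ together with the rigidity of automorphisms of $\PSL_2(\F_\ell)$, and finally to pass from the resulting isomorphism of $\SL_2$-parts to the statement about all of $H$ by a centralizer computation that manufactures the quadratic character $\chi$.

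First I would set $N_1 := \{b\in B : (b,1)\in H\}$ and $N_2 := \{b'\in B' : (1,b')\in H\}$. Surjectivity of the projections makes $N_1\trianglelefteq B$ and $N_2\trianglelefteq B'$, and the inclusion $H\subseteq D$ forces $\det b=1$ whenever $(b,1)\in H$, so $N_1\leq\SL(V)$ and $N_2\leq\SL(V')$. Goursat's lemma then furnishes an isomorphism $\psi\colon B/N_1\xrightarrow{\ \sim\ }B'/N_2$ with graph $H$, and $H\subseteq D$ makes $\psi$ compatible with the determinant maps onto $\mu:=\{x^e : x\in\F_\ell^\times\}$. I would also record that $H_0:=H\cap(\SL(V)\times\SL(V'))$ is exactly the kernel of the surjection $H\to\mu$, $(b,b')\mapsto\det b$, so $H/H_0\cong\mu$ is cyclic; that $H_0$ still projects onto $\SL(V)$ and onto $\SL(V')$ (a lift of $b\in\SL(V)$ to $(b,b')\in H$ has $\det b'=\det b=1$); and that $H_0$ meets the two coordinate axes in $N_1$ and $N_2$.

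Next I would pin down $N_1,N_2$. For $\ell\geq5$ the normal subgroups of $\SL_2(\F_\ell)$ are $1$, $\{\pm I\}$, and the whole group, so $N_1\in\{1,\{\pm I\},\SL(V)\}$ and similarly for $N_2$. The possibility $N_1=\SL(V)$ is excluded by the hypothesis $H\neq D$: if $\SL(V)\times\{1\}\subseteq H$, then for each $b'$ in the (surjective) image of the second projection the set $\{b:(b,b')\in H\}$ is a full $\SL(V)$-coset on which $\det$ is constantly $\det b'$, and ranging over all such $b'$ forces $H=D$. Hence $N_1,N_2\in\{1,\{\pm I\}\}$, and $|N_1|=|N_2|$ because $|B|=|B'|$. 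Restricting $\psi$ to the $\SL_2$-parts yields an isomorphism $\SL(V)/N_1\xrightarrow{\ \sim\ }\SL(V')/N_2$, i.e.\ an isomorphism $\SL(V)\cong\SL(V')$ or $\PSL(V)\cong\PSL(V')$. Here comes the step I expect to be the main obstacle — and the one that genuinely uses $\ell\geq5$: invoking that every automorphism of $\PSL_2(\F_\ell)$ is conjugation by an element of $\PGL_2(\F_\ell)$ (there are no field automorphisms of the prime field $\F_\ell$, and the transpose--inverse ``graph'' automorphism of $\SL_2$ is conjugation by $\begin{pmatrix}0&1\\-1&0\end{pmatrix}$), so that the above isomorphism is induced by $b\mapsto fbf^{-1}$ for an $\F_\ell$-linear isomorphism $f\colon V\to V'$; when $N_1=1$ the leftover sign is absorbed since $\SL_2(\F_\ell)$ is perfect.

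Finally I would extend to all of $H$. Choose $(b_0,b_0')\in H$ with $\det b_0$ generating $\mu\cong H/H_0$. Since $H_0\trianglelefteq H$, conjugating $H_0$ by $(b_0,b_0')$ and comparing first coordinates shows that $c_0:=(fb_0f^{-1})^{-1}b_0'$ normalizes $\SL(V')$ with ``conjugation sign'' a homomorphism $\SL(V)\to\{\pm1\}$, hence trivial by perfectness; so $c_0$ centralizes $\SL(V')$, is scalar by Schur's lemma, and satisfies $\det c_0=1$, whence $c_0=\pm I'$. Writing a general $h=(b,b')\in H$ as $(b_0,b_0')^k$ times an element of $H_0$ (with $(\det b_0)^k=\det b$) then gives $b'(fbf^{-1})^{-1}\in\{\pm I'\}$; calling this scalar $\chi(h)$, it depends only on $h$, and multiplying two such identities gives $\chi(h_1h_2)=\chi(h_1)\chi(h_2)$. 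Thus $\chi$ is a character of $H$ with $\chi^2=1$ and $b'=\chi(h)\,fbf^{-1}$ for all $h=(b,b')\in H$, as required.
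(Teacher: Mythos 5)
The paper does not prove this lemma: it is imported verbatim from Masser--W\"ustholz \cite[Lemma 5.1]{MW_Galois} (itself a mild generalization of Serre's Lemme~8) and used as a black box, so there is no internal proof to compare against. Your Goursat-lemma reconstruction is correct, and the key steps are all sound: $N_1,N_2$ land in $\SL(V),\SL(V')$ because $H\subseteq D$; the classification of normal subgroups of $\SL_2(\F_\ell)$ for $\ell\geq 5$, together with the elimination of $N_i=\SL$ via $H\neq D$, pins down $N_1,N_2\in\{1,\{\pm I\}\}$ with $|N_1|=|N_2|$; the rigidity $\Aut(\PSL_2(\F_\ell))=\PGL_2(\F_\ell)$ (no field automorphisms of a prime field, graph automorphism inner) supplies the $\F_\ell$-linear map $f$; and the Schur's-lemma/perfectness computation promotes the scalar $c_0=\pm I'$ to a genuine quadratic character $\chi$ on all of $H$. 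You also correctly handle the two places one might worry: when $N_1=N_2=\{\pm I\}$ one only gets $s'=\pm f s f^{-1}$ for $(s,s')\in H_0$, but that sign is absorbed into $\chi$ since $\chi(h)$ is defined directly as the scalar $b'(fbf^{-1})^{-1}$; and the potential sign homomorphism $\SL(V')\to\{\pm 1\}$ arising in the centralizer step is killed by perfectness. This is, in outline, the argument of Serre and Masser--W\"ustholz, so you have recovered the standard route rather than found a new one.
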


We are ready to state  and prove our main theorem.

\begin{theorem}\label{effective_thm}
Let $K = K(C)$  be the function field of a smooth, projective, and geometrically integral curve $C$ of genus $g$ over a field $k$ with an embedding $k \hookrightarrow \C$. 
Define
\[ \mathfrak{c}(g) = 40353607 \cdot \mathcal{N}(g)^{3} \cdot  \max\{1,g^{9/2}\}, \]
where
$\mathcal{N}(g)$
is the constant introduced in \cref{BT_thm}.
Let $n \geq 2$,
let $E_1, \ldots, E_n$ be non-isotrivial, pairwise non-isogenous elliptic curves over $K$, and let
$A := E_1 \times \cdots \times E_n$. 
For an arbitrary prime $\ell$, denote by 
\begin{align*} 
\rho_{A,\ell} : G_K &\longrightarrow \GL(E_1[\ell]) \times \cdots \times \GL(E_n[\ell]) \simeq \GL_2(\mathbb{Z}/\ell\mathbb{Z})^n \\
\sigma &\longmapsto (\rho_{E_1,\ell}(\sigma),\ldots, \rho_{E_n,\ell}(\sigma)), 
\end{align*}
the mod-$\ell$ Galois representation of $A$, 
and set
$\Psi_\ell(A) := \textnormal{Image}(\rho_{A,\ell}) \cap \SL_2(\Z/\ell\Z)^n$.
Then
$\Psi_\ell(A) = \SL_2(\Z/\ell\Z)^n$
for all primes $\ell > \mathfrak{c}(g)$. 
\end{theorem}
\begin{proof}
We follow the proof structure of \cite[Proposition 1]{MW_Galois}. 
We circumvent the dependence on 
the number field setting
in the proof of this referenced result
by utilizing our function field ingredients
  \cref{CH_thm} and
  \cref{bisep_bound_genus_char0}.

First, we handle the case $n = 2$ of a product of two elliptic curves. 
Let 
$E_1$ and $E_2$
be two non-isotrivial, non-isogenous elliptic curves over $K$, and let $\ell$ be a prime satisfying $\ell \geq c(g)$. Note that it follows that $\ell \geq c(0) = 17$. 
Take $H := \textnormal{Image}(\rho_{E_1 \times E_2,\ell})$ ,
$V_1 := E_1[\ell]$, and $V_2 = E_2[\ell]$.
We deduce from 
 \cref{CH_thm}  that $H$ projects surjectively onto $\SL_2(\Z/\ell\Z)$ in each factor of $\SL_2(\Z/\ell\Z)^2$.

We may then  apply \cref{MW_lemma} with the above $H, V_1$ and $V_2$ (the correct divisor $e \mid (\ell-1)$ is determined by this setup as described in \cite[p. 251]{MW_Galois}, while $B_1, B_2,$ and $D$ are as described in the statement of the referenced lemma). If $H = D$, then we are done. 
Otherwise, we have 
an isomorphism $f: V_1 \to V_2$ and a character $\chi_0$ on $G_K$, induced from the character $\chi$ on $H$ given by the lemma, such that, for every $\sigma \in G_K$, 
\[ \rho_{E_2,\ell}(\sigma) = \chi_0(\sigma)f \rho_{E_1,\ell}(\sigma) f^{-1}. \]
This means that $f$ induces a $G_K$-module isomorphism
$$
f_{\chi_0}: E_1[\ell] \longrightarrow E_{2, \chi_0}[\ell],
$$
where 
$E_{2, \chi_0}$ is the quadratic twist of $E_2$ by $\chi_0$;
so, 
for every $\sigma \in G_K$,
\[ \rho_{E_{2, \chi_0},\ell}(\sigma) = f_{\chi_0} \rho_{E_1,\ell}(\sigma) f_{\chi_0}^{-1}. \]

We note that
\[ \Gamma := \{(x,f_{\chi_0}x) \in E_1 \times E_{2, \chi_0} \mid x \in E_1[\ell]\} \]
is a subgroup of 
$E_1 \times E_{2, \chi_0}$ 
stable under the action of $G_{K}$. Indeed, if $\sigma \in G_{K}$ and $y := \rho_{E_1,\ell}(\sigma)x$, then 
\[ \sigma(x, f_{\chi_0}x) = (\rho_{E_1,\ell}(\sigma)x, \rho_{E_{2, \chi_0}, \ell}(\sigma)f_{\chi_0}x) = (y, f_{\chi_0}y) \in \Gamma. \]
Therefore, the abelian surfaces 
$E_1 \times E_{2, \chi_0}$ and $(E_1 \times E_{2, \chi_0})/\Gamma$ 
are both defined over $K$ 
and are isogenous via the natural quotient map 
$\pi: E_1 \times E_{2, \chi_0} \to (E_1 \times E_{2, \chi_0})/\Gamma$.

By our assumptions on 
$E_1, E_2$ and $k$,
\cref{bisep_bound_genus_char0}
provides the existence of an isogeny 
$$\psi_0: E_1 \times E_{2, \chi_0} \to (E_1 \times E_{2, \chi_0})/\Gamma$$
with 
$$\textnormal{deg}(\psi_0)  \leq  49^3 \cdot \mathcal{N}(g)^2 \cdot \max\{1,g^3\}.$$
We then have an endomorphism
\[ \epsilon := \widetilde{\psi_0} \circ \pi \in \textnormal{End}_{\overline{K}}(E_1 \times E_{2, \chi_0}) \simeq \mathbb{Z}^2 \]
of degree $\ell^2 \cdot \textnormal{deg}(\psi_0)^3$, which we can represent by a matrix $\begin{pmatrix} a & 0 \\ 0 & b \end{pmatrix}$ for some $a,b \in \mathbb{Z}$. 
As $\pi$ annihilates $\Gamma$, so too does $\epsilon$, and so for all $x \in E_1[\ell]$ we have
\[ ax = bf_{\chi_0}x = 0. \]
This means that the multiplication-by-$a$ map on $E_1$ and the multiplication-by-$b$ map on $E_{2, \chi_0}$ both annihilate 
the $\ell$-torsion (as $f_{\chi_0}$ is an isomorphism), and so $\ell \mid a$ and $\ell \mid b$. Moreover, 
$\ell^4 \mid a^2b^2 = \textnormal{deg}(\epsilon)$, 
so 
$\ell^2 \mid \textnormal{deg}(\psi_0)^3$. 
We can then rule out this case  by assuming that 
\[\ell >  40353607 \cdot \mathcal{N}(g)^{3} \cdot  \max\{1,g^{9/2}\}, \]
which we now do.
Thus, for all primes
\begin{align*}
\ell &>  40353607 \cdot \mathcal{N}(g)^{3} \cdot  \max\{1,g^{9/2}\} \\
&= \max\left\{c(g), 40353607 \cdot \mathcal{N}(g)^{3} \cdot  \max\{1,g^{9/2}\} \right\}, \\
\end{align*}
we have the desired equality $\Psi_\ell(E_1 \times E_2) = \SL_2(\Z/\ell\Z)^2$.


We  obtain the general case from the $n=2$ case, as follows. 
Let 
$E_1, E_2, \ldots, E_n$ 
be  non-isotrivial, pairwise non-isogenous elliptic curves over $K$, and let $\ell$ be a prime satisfying $\ell > \mathfrak{c}(g)$.
By the previous case of a product of two elliptic curves, the image of the map 
from $\Psi_\ell(E_1 \times E_2 \times \ldots \times E_n)$ to
$\SL(\Z/\ell\Z)^2$
via the projection of $\SL_2(\Z/\ell\Z)^n$ onto any two distinct factors is surjective. 
 As in the proof of \cite[Proposition 1]{MW_Galois}, we now use the result of Ribet \cite[Lemma 5.2.2]{Ribet_Galois} which implies that $\Psi_\ell(E_1 \times E_2 \times \ldots \times E_n) = \SL_2(\Z/n\Z)^n$ (noting that $\ell > c(0) = 17 > 5$). 
 This completes the proof of our main theorem.
\end{proof}

\begin{remark}
In our setup for \cref{effective_thm} (see also \cref{effective_theorem_introduction_version} stated in the introduction), 
we assume that $\textnormal{dim}(A) = n \geq 2$. The fact that $\mathfrak{c}(g) \geq c(g)$ for all $g$ 
allows the flexibility of assuming $n \geq 1$.
Of course, in the $n=1$ case, $\mathfrak{c}(g)$ can be a cruder bound, in which case one should defer to the bound $c(g)$ from \cref{CH_thm}. 
\end{remark}

\begin{remark}\label{non_uniform_remark}
We thank Ariel Weiss for pointing out to us that the character $\chi_0$ in the proof of \cref{effective_thm} yields an isogeny over $K$ with source  the abelian surface $E_1 \times E_{2, \chi_0}$. While Serre notes that the isomorphism $f$ yields an isomorphism between the $\ell$-torsion subgroups of $E_1$ and the twist $E_{2, \chi_0}$ \cite[p. 328]{Serre}, 
Masser--W{\"{u}}stholz trivialize $\chi_0$ to get an isogeny over the 
extension $K^{\chi_0}/K$, of degree at most $2$,
 instead of using the associated isogeny of abelian surfaces over $K$ as in our proof.
Using their isogeny estimates, this line of argument only costs 
Masser--W{\"{u}}stholz
 a factor of $2$ in their final bound compared to the argument that we use. 
 For us, the argument over $K$ importantly allows us to forego other possible arguments limiting the ramification of the characters that show up, which would be needed for the uniform version \cref{effective_thm}; the character $\chi_0$ depends on $\ell$, and it is not immediately clear that the extensions $K^{\chi_0}/K$ obtained as one ranges over $\ell$ have uniformly bounded genus.
\end{remark}

\appendix
\section{Function Field Frey--Mazur at Composite Level 
\\(with Benjamin Bakker)}\label{appendix}
\def\vol{\operatorname{vol}}
\def\mult{\operatorname{mult}}

\subsection{Introduction and strategy}\label{BT_thm_strategy}
This appendix is devoted to the extension of Bakker--Tsimerman's Frey--Mazur result over function fields over $\mathbb{C}$ \cite[Corollary 30]{BT16} to composite level, as we stated in \cref{Frey_Mazur_section} and used in \cref{deg_bound_section}. While an extension of their main results in terms of the gonality of the base field \cite[Theorem 2, Theorem 3]{BT16} should also be achievable via the methods used in their paper, we only require for our purposes a result in terms of the genus of the base field as in \cref{BT_thm}. We restrict to this result, which we recall below, for brevity of this appendix. 

\begin{theorem}[{cf. \cite[Theorem 2]{BT16}}]\label{BT_thm_appendix}
Let $K$ be the function field of a smooth, projective, and irreducible curve $C$ of genus $g$ over $\mathbb{C}$. Let $E_1$ and $E_2$ be non-isotrivial, non-isogenous elliptic curves over $K$. There exists a constant $\mathcal{N}(g)$, depending only on $g$, such that if there exists an $N$-congruence between $E_1$ and $E_2$, then $N \leq \mathcal{N}(g)$. 
\end{theorem}

We briefly recall here the strategy of \cite{BT16}, and  refer the reader to this referenced work for further background and details. The authors consider over $\mathbb{C}$ the surface $Z(N)$ parameterizing $N$-congruences of elliptic curves, which is realized as the quotient of $X(N) \times X(N)$
by the diagonal action of $\PGL_2(\Z/N\Z)$. 
Here, $X(N)$ is the modular curve parameterizing elliptic curves $E$ with a choice of projective isomorphism 
$E[N]\simeq (\Z/N\Z)^2$.
 An $N$-congruence of elliptic curves 
 $E_1[N] \simeq E_2[N]$ 
 over a function field $K$ over $\mathbb{C}$, with model $C$, 
 implies the existence of a map $C \to Z(N)$ over $\C$,
 and \cref{BT_thm_appendix} is then equivalent to the statement that any such curve $C$ of significantly large genus (or, in their case, gonality) must live on a Hecke divisor $H_M$ in $Z(N)$, 
 which parameterizes congruences between $M$-isogenous elliptic curves. 

Working in terms of the gonality $\gamma$ of the function field $K$, 
the authors take a gonal map $C \to \P^1$, 
which provides a map $\P^1 \to \Sym^\gamma(Z(N))$ to the symmetric product of $Z(N)$, 
and then lift this to a map
\[ C \to \left(X(N) \times X(N)\right)^\gamma. \]
They estimate the genus of $C$ in two ways in order to reach a contradiction for $N = p$ a significantly large prime, with the main work going into bounding the ramification of $C \to \P^1$ by showing that the ramification points of the map
\[\left( X(p) \times X(p)\right)^\gamma \to \Sym^\gamma(Z(p)) \]
have small incidence with $C$. This is done via the multiplicity estimates of \cite[Section 6]{BT16}, which in turn are fed by results that special points on curves in $X(p) \times X(p)$ tend to repel from \cite[Section 4]{BT16} and from general volume estimates for special subvarieties in hyperbolic manifolds proven in \cite[Section 5]{BT16}. 

\subsection{The requisite machinery}

In this section, which is the technical heart of this appendix, we generalize the necessary results from \cite{BT16} to prime-power levels. Once obtained, these will lead to a quick proof of \cref{BT_thm_appendix} along the same lines of the proof of the main theorem in the referenced work. In all that follows, we adopt the notation and conventions from \cite[\S 1.3]{BT16}, in particular regarding asymptotics and metrics, departing, as needed, from the notation of the previous sections of our paper. 
We emphasize that the letters $p$ and $k$ are now used for an arbitrary prime and an arbitrary positive integer, respectively,
while $g$ denotes either a group element or a non-negative integer.

We begin by borrowing terminology from \cite[\S 2.4]{BT16} for ramification points of the natural map $X(N)\times X(N) \to Z(N)$.
\begin{definition}
    Let $x,y \in X(N)$ be CM points induced by pairs $(E_x,\varphi_x)$, $(E_y,\varphi_y)$, and suppose that $x$ and $y$ have a common non-trivial stabilizer element $g \in \PGL_2(\Z/N\Z)$. Take lifts $g_x, g_y \in \GL_2(\Z/N\Z)$ of $g$ and automorphisms $h_x$, $h_y$ of $E_x$, $E_y$, respectively, having the same characteristic polynomial such that 
    \[ h_x \circ \varphi_x = \varphi_x \circ g_x^{-1}, \]
    and similarly with $x$ replaced by $y$. We call the point $(x,y) \in X(N)\times X(N)$ a \textbf{Heegner CM point} if the eigenvalues of the action of $h_x, h_y$ on the tangent spaces $T_0E_x$, $T_0E_y$ are the same, 
    and an \textbf{anti-Heegner CM point} otherwise. When the level $N$ is clear from context, we denote the sets of Heegner CM points and anti--Hegner CM points on $X(N)\times X(N)$ by $\CM^+, \CM^-$, respectively, and we set 
    \[ \CM := \CM^+ \cup \CM^-. \]
\end{definition}

\begin{definition}
    We call a point $(x,y) \in X(N)\times X(N)$ a \textbf{singular bicusp} if $x,y$ are cusps on $X(N)$ sharing a non-trivial stabilizer in $\PGL_2(\Z/N\Z)$. When the level $N$ is clear from context, we denote the set of singular bicusps on $X(N)\times X(N)$ by $\SBC$.
\end{definition}

\subsubsection{Injectivity radii and heights}
Recall that the \textbf{injectivity radius} $\rho_C(x)$ of a compact hyperbolic curve $C$ at a point $x\in C$ is half the minimum length of a geodesic loop based at $x$, and the \textbf{injectivity radius} $\rho_C$ of $C$ is the minimum 
\[ \rho_C := \min_{x\in C}\rho_C(x) . \]  
First and foremost, we have the following:
\begin{lemma}
    Equip  $X(N)$ with its hyperbolic metric of constant sectional curvature $-1$.  Then
    \[\rho_{X(N)}=2\log N +O(1)\]
\end{lemma}
\begin{proof}
    See \cite[Corollary 8]{BT16}.  The arguments therein do not require that $N$ is prime.
\end{proof}
Note also that the height estimates from \cite[\S3.3]{BT16} similarly generalize to $X(N)$; we leave this to the reader.

We view $X(N)$ as a Galois \'etale cover of $X(1)_N:=\Gamma(2,3,N)\backslash \HH$ with Galois group 
$G(N)\simeq \PGL_2(\Z/N\Z)$, 
where $\Gamma(2,3,N)$ is the triangle group with vertices $i,e^{i\theta_N},iy_N$.  The natural map $Y(1)\to X(1)_N$ gives a map on fundamental groups $\PSL_2(\Z)\to \Gamma(2,3,N)$, which induces an isomorphism $\PGL_2(\Z/N\Z)\to G(N)$ that we suppress from the notation.  Let $\iota,\varpi,\frak{y}\in X(N)$ be the images of $i,e^{i\theta_N},iy_N\in\HH$ 
and let $\bar H\subset G(N)$ be the stabilizer of $\frak{y}$.

\subsubsection{Repulsion of CM points and multiplicities}
\begin{lemma}[{cf. \cite[Proposition 14]{BT16}}]For any sufficiently small $\delta>0$, any prime $p$, and any sufficiently large 
positive integer $k$, we have the following. 
    Let $\xi,\xi'\in X(p^k)\times X(p^k)$ be distinct Heegner CM points points which have the same projections in $X(1)\times X(1)$.  If $d(\xi,\xi')<\delta\rho_{X(p^k)}$, then the projection of $\xi$ to $X(p^{k-O(k\delta)})\times X(p^{k-O(k\delta)})$ lies on the Hecke curve $T_d$ 
    for some
    $d=p^{O(k\delta)}$. 
\end{lemma}
\begin{proof}The proof in \cite[Proposition 14]{BT16} works with a very slight modification, so we include a sketch here.  
Setting $\xi :=( x, y)$ and $\xi' := (x', y')$, we may assume $\xi = (\iota,g ^{-1}\iota)$ (respectively, $(\varpi, g\varpi)$) for some $g \in G(p^k)$. Set 
$t : = 
\begin{pmatrix}
0 & 1 \\
-1 & 0 
\end{pmatrix}$ 
(respectively, 
$t :=
\begin{pmatrix}
0 & 1 \\
-1 & -1 
\end{pmatrix}$).  
Since $\xi$ is a Heegner CM point, $g$ commutes with $\bar t$, hence is in the $\Z/p^k\Z$-span of $\{1,\bar t\}$.  We then produce $M_x,M_y\in\SL_2(\Z)$ whose entries are of size $p^{O(k\delta)}$ with $x' := \bar M_xx$ and $y' := g^{-1}\bar M_y x$.  
Let $F\subset M_2(\Z)$ be the $\Z$-span of $\{1,t\}$, and note that it is a saturated submodule.  We have a map
\begin{equation}F\xrightarrow{f}M_2(\Z),\;\;f(X)=[t,M_y^{-1}XM_x]\label{commutator}\end{equation}
which is nonzero as in \cite[Lemma 15]{BT16}.  The element $g$ is in the kernel of $f$ mod $p^k$ and has unit determinant (so is in particular of additive order $p^k$).  
Let 
$B := \textnormal{Image} (f)$
and note that the obstruction to lifting $g$ to $\ker (f)$ is its image in $\operatorname{Tor}^\Z_1(\operatorname{coker} (f),\Z/p^k\Z)\subset B/p^kB$.  The  torsion of the cokernel has size $p^{O(k\delta)}$, as therefore does $\operatorname{Tor}^\Z_1(\operatorname{coker} (f),\Z/p^k\Z)$, so there is a (nonzero) element $D\in \ker (f)$ of size $p^{O(k\delta)}$ whose mod $p^k$ reduction up to scale differs from $g$ by an element which is annihilated by $p^{O(k\delta)}$, and in particular is a multiple of $p^{k-O(k\delta)}$.  This implies the claim.
\end{proof}

For the next statement on multiplicities of CM points on $X(p^k)\times X(p^k)$, we recall the following notation from \cite{BT16}: for a set $S$ of closed points of a curve $C$, we let 
\[ \mult_S(C) := \sum_{x \in S} \mult_x(C). \]
We also denote by $a(r)$ the area of a $1$-dimensional radius $r$ hyperbolic ball, so with our normalization $a(r)=4\pi\sinh^2(r/2)$. 

\begin{corollary}\label{multiplicity_CM}
For any sufficiently small $\delta>0$, any prime $p$, and any sufficiently large positive integer $k$, we have that for any non-Hecke curve $C$ in $X(p^k)\times X(p^k)$,
    \[\mult_{\mathrm{CM}^+}(C) =O(p^{-k\delta}\vol(C)).\]
\end{corollary}
\begin{proof}
    This is as in \cite[Proposition 25]{BT16}.  For sufficiently small $\delta_0$, we write $\mathrm{CM}^+=S\sqcup T$ where $T$ is the subset of Heegner CM points whose image under $\pi:X(p^k)\times X(p^k)\to X(p^{k(1-\delta_0)})\times X(p^{k(1-\delta_0)})$ lie on a Hecke curve of degree $\leq p^{k\delta_0}$.  By \cite[Theorem 19]{BT16}, for $\delta_1$ a sufficiently small multiple of $\delta_0$,
    \begin{align*}
        \mult_{\mathrm{CM}^+}(C) &\ll\frac{1}{a(\delta_1\rho_{X(p^k)})}\sum_{\xi\in S}\vol\left(C\cap B(\xi,\delta_1\rho_{X(p^k)})\right)+ \sum_{d\leq p^{k\delta_0}}(C.\pi^*T_d)\\
        &\ll\frac{\vol(C)}{a(\delta_1\rho_{X(p^k)})}+\sum_{d\leq p^{k\delta_0}}(T_d^*\pi_*C. \Delta)\\
        &\ll\frac{\vol(C)}{a(\delta_1\rho_{X(p^k)})}+\frac{\vol(C)}{a(\rho_{X(p^{k(1-\delta_0)})}/2)}\sum_{d\leq p^{k\delta_0}}\deg T_d\\
        &\ll \left(p^{-2k\delta_1}+p^{3k\delta_0}\cdot p^{-k(1-\delta_0)}\right)\vol(C).
    \end{align*}
    Choosing $\delta_0,\delta_1$ to be appropriate multiples of $\delta$, the claim is proven.
\end{proof}

\begin{remark}
    In \cite[Proposition 14]{BT16} and \cite[Proposition 25]{BT16}, Heegner and anti-Heegner CM points are treated together. In the preparation of this appendix, the authors of \cite{BT16} realized that this was in error; the argument in \cite[Proposition 14]{BT16} only holds for Heegner CM points, and one must provide a separate argument for anti-Heegner CM points as is done in \cite{BT13}, which treats the compact Shimura curve case. This will be handled in a forthcoming erratum \cite{BTerror} by Bakker--Tsimerman. 
\end{remark}

Modifying the argument of \cite{BTerror} in the same way, we obtain:
\begin{corollary}\label{multiplicity_anti_CM}
For any sufficiently small $\delta>0$, any prime $p$, and any sufficiently large positive integer $k$, we have that for any non-Hecke curve $C$ in $X(p^k)\times X(p^k)$,
    \[\mult_{\mathrm{CM}^-}(C) =O(p^{-k\delta}\vol(C)).\]
\end{corollary}

\subsubsection{Repulsion of bicusps and multiplicities}

\begin{lemma}[{cf. \cite[Proposition 17]{BT16}}]For any sufficiently small $\delta>0$, any prime $p$, and any sufficiently large positive integer $k$, we have the following.  If $\xi\in X(p^k)\times X(p^k)$ is within $(1/2+\delta)\rho_{X(p^k)}$ of at least three distinct singular bicusps, then all the singular bicusps within $(1/2+\delta)\rho_{X(p^k)}$ of $\xi$ project to the same Hecke curve $T_d$ in $X(p^{k-O(k\delta)})\times X(p^{k-O(k\delta)})$ 
for some 
 $d=p^{O(k\delta)}$. 
\end{lemma}
\begin{proof}
    Enumerate the set of singular bicusps $T=\{\zeta_j=(c_j,c_j')\}$ that are within $(1/2+\delta)\rho_{X(p^k)}$ of $\xi$.  The first paragraph of the proof in \cite{BT16} shows that for any two distinct $\zeta_j,\zeta_k$, both projections to $X(p^k)$ must be distinct, and that we may assume $\xi=(\iota,g\iota)$ for $g\in G(p^k)$.  For each $j$ we produce $M_{j},M_{j}'\in\SL_2(\Z)$ of size $p^{O(k\delta)}$ with $c=\bar M_j\frak{y}$ and $c'_j=g\bar M'_j\frak{y}$, and the argument of case (1) shows that the cosets $\bar M_jN(\bar H)$ are pairwise distinct, as are the cosets $\bar M_j'N(\bar H)$.  We have that $g\in\bigcap_{j}\bar M_jN(\bar H)\bar M_j'^{-1}\subset G(p^k)$.  Consider the map
 \begin{equation}M_2(\Z)\xrightarrow{f}\Z^T,\;\;f(X)_j=\mbox{lower left entry of }M_j^{-1}XM_j'\label{commutator}\end{equation}
    whose cokernel has size $p^{O(k\delta)}$.  Since $|T|\geq 3$, $\ker (f)$ is at most rank one given that any $X\in \ker (f)$ fixes at least 3 points on $\P^1(\R)$.  As before, $g$ is an element of the kernel mod $p^k$ of additive order $p^k$, so there exists an element $D\in \ker (f)$ of height $p^{O(k\delta)}$ whose reduction up to scale agrees with $g$ mod $p^{k-O(k\delta)}$.
    
\end{proof}

\begin{corollary}\label{multiplicity_cusps} 
For any sufficiently small $\delta>0$, any prime $p$, and any sufficiently large positive integer $k$, we have that for any non-Hecke curve $C$ in $X(p^k)\times X(p^k)$,
    \[p^k \cdot \mult_{\mathrm{SBC}}(C) =O(p^{-k\delta}\vol(C)).\]
\end{corollary}
\begin{proof}
    The proof is as in \cite[Proposition 26]{BT16} (with the correction from \cite{BTerror}) and the above corollary.  We leave the details to the reader.  Note that the proof of \cite[Proposition 12]{BT16} does not use that $p$ is prime.
    \end{proof}

\subsection{Proving \cref{BT_thm_appendix}}

Because we are working in terms of the genus of our function field rather than the gonality, we can side-step the arguments involving the symmetric product from the strategy outlined above and hence bypass having to generalize the results on multidiagonals from \cite{BT16}.  The argument therefore runs parallel to that of \cite{BT13}---see \cite[p. 711]{BT16} for related comments.

By the discussion in \cref{BT_thm_strategy}, \cref{BT_thm_appendix} is equivalent to the following statement, which we now prove. 

\begin{theorem}\label{BT_thm_appendix_restatement}
Let $g$ be a non-negative integer. There exists a positive integer $\mathcal{N}(g)$ such that, for all prime powers $p^k$, if $C \to Z(p^k)$ is a non-Hecke curve of genus $g$, then $p^k \leq \mathcal{N}(g)$. 
\end{theorem}

\begin{proof}
First, note that it suffices to prove the statement for prime powers $N = p^k$. Fix $g \in \mathbb{Z}^+$ and suppose to the contrary that for arbitrarily large prime powers $p^k$ we have a non-Hecke curve $C \to Z(p^{k})$ of genus $g$. For any sufficiently large prime power $p^k$, we take the normalization $C'$ of a component of the lift of $C$ with respect to the quotient map $q: X(p^k)\times X(p^k) \to Z(p^k)$. 
\[
\begin{tikzcd}
C' \arrow[r, "\psi"] \arrow[d, "\alpha"] & X(p^k)\times X(p^k) \arrow[d, "q"] \\
C \arrow[r]            & Z(p^k)            
\end{tikzcd}
\]
The ramification of $\alpha$ is supported on that of the map $q$, and hence occurs only at points in the fibers under $\psi$ over CM points and singular bicusps. The ramification index of $\psi^{-1}(\xi)$ for any $\xi \in \SBC$ is $O(p^k)$, while that for $\xi \in \CM$ is at most $3$. By \cref{multiplicity_CM}, \cref{multiplicity_anti_CM}, and \cref{multiplicity_cusps}, we then have
\[ \textnormal{Ram}(\alpha) \ll  |\psi^{-1}(\CM)| + p^k \cdot |\psi^{-1}(\SBC)| = o(\vol(C')). \]
Riemann--Hurwitz then gives
\begin{equation}\label{RH_B'_to_B}
g(C') = O(\deg(\alpha))+ o(\vol(C')) ,
\end{equation}
where the implied constant in the first term depends on $g$.  On the other hand, by the Schwarz lemma we have $g(C')\gg \vol(C')$, so
\[\vol(C')=O(\deg(\alpha)).\]
Letting $\deg_{X(p^k)}(C')$ be the maximum degree of the projections $C'\to X(p^k)$, we see that $\vol(C')$ is comparable to $\deg_{X(p^k)}(C') \cdot \vol(X(p^k))$, which is in turn comparable to $\deg_{X(p^k)}(C')\cdot |G(p^k)|$ (since $-\chi(X(1)_{p^k})=\frac{1}{6}-p^{-k}$).  Thus, $\deg_{X(p^k)}(C')$ is bounded.  From the commutative square
\[
\begin{tikzcd}
C' \arrow[r] \arrow[d, "\alpha"] & X(p^k) \arrow[d] \\
C \arrow[r]            & X(1)_{p^k}            
\end{tikzcd}
\]
we deduce $\deg_{X(1)_{p^k}}(C)$ is bounded.  As in \cite[\S 5]{BT13}, there is a bounded family of such curves $C\to X(1)_{p^k}\times X(1)_{p^k}$, and therefore finitely many possibilities for the image on fundamental groups (after removing the cusps).  For any non-Hecke curve, this image is Zariski dense in $\SL_2\times\SL_2$ by Andr\'e--Deligne \cite{andre}, and using \cite[Lemma 2.7]{Rapinchuk} in place of the theorem of Nori we have a contradiction.
\end{proof}


\bibliographystyle{alpha}
\bibliography{references}

\end{document}